\documentclass[11pt,reqno]{amsart}

\usepackage[a4paper,margin=1in]{geometry}
\usepackage{graphicx}
\usepackage{mathtools}
\usepackage{subfig}
\usepackage{tabularx}
\usepackage{amsfonts}
\usepackage{amssymb}
\usepackage{algorithmic}
\usepackage{xcolor}
\usepackage{amsthm}
\usepackage{booktabs}
\newtheorem{theorem}{Theorem}[section] 
\newtheorem{lemma}[theorem]{Lemma}     
\newtheorem{corollary}[theorem]{Corollary}
\newtheorem{proposition}[theorem]{Proposition}

\newtheorem{definition}{Definition}[section]

\numberwithin{equation}{section}

\title{Tur\'an problem on the union of three intervals of equal length}
\author{Xiao-Ye Fu}
\address{School of Mathematics and Statistics, Hubei Key Laboratory of Mathematical Sciences,
	Central China Normal University, Wuhan 430079, P.R. China}
\email{xiaoyefu@ccnu.edu.cn}

\author{Yue-Kun Li}
\address{School of Mathematics and Statistics, Hubei Key Laboratory of Mathematical Sciences,
	Central China Normal University, Wuhan 430079, P.R. China}
\email{m13937705619@163.com}

\author{Wei-Jie Wang}
\address{School of Mathematics and Statistics, Hubei Key Laboratory of Mathematical Sciences,
	Central China Normal University, Wuhan 430079, P.R. China}
\email{wwjmath@163.com}

\author{Xuan Wang\textsuperscript{*}}
\address{School of Mathematics and Statistics, Hubei Key Laboratory of Mathematical Sciences,
	Central China Normal University, Wuhan 430079, P.R. China}
\email{xuan\_wang@mails.ccnu.edu.cn}

\thanks{\textsuperscript{*}Corresponding author.}

\date{\today}

\subjclass[2020]{Primary 42A82; Secondary 42A05, 41A17, 90C22}
\keywords{Tur\'an extremal problem, positive definite function, three-interval support,
	lattice limit, nonnegative trigonometric polynomial}

\begin{document}
	
	\maketitle
	
	\begin{abstract}
		This paper addresses the Tur\'an extremal problem on the symmetric three-interval set
		$$
		\Omega_\lambda=(-1,1)\cup(\lambda-1,\lambda+1)\cup(-\lambda-1,-\lambda+1),\qquad\lambda\ge0.
		$$
		We  establish a discrete approximation principle relating the continuous Tur\'an problem on $\Omega_\lambda$ to the discrete Tur\'an problem on an associated finite discrete set. This enables the continuous problem to be exactly expressed as a limit of finite nonnegative trigonometric polynomial problems, which are equivalent to finite semidefinite programs. Using this framework, we compute the Tur\'an constant for integer and rational parameters \(\lambda\). We further analyze the dependence of the Tur\'an constant on \(\lambda\) and derive upper bounds for all \(\lambda>5\).
	\end{abstract}

	\section{Introduction}
	Given an origin-symmetric open set $\Omega$, the Tur\'an extremal problem 
	associated with $\Omega$ is to ask how large the integral of a continuous positive definite function $f$ with support contained in $\Omega$ and satisfying $f(0) = 1$. The problem is proposed by Tur\'an and Stechkin \cite{Stechkin1972} and makes sense in every locally compact abelian (LCA) group~\cite{Ivanov2006, IvanovIvanov2010, KR2006, Revesz2011}. Related extremal questions for positive definite functions and trigonometric polynomials go back to Carath\'eodory~\cite{Caratheodory1911}, Fej\'er~\cite{Fejer1916}, and Siegel~\cite{Siegel1935}. The Tur\'an extremal problem has also been investigated for some particular domains in $\mathbb{R}^d$, such as the Euclidean ball~\cite{Gabardo2024, Gorbachev2001, GorbachevManoshina2004, Siegel1935}, the regular hexagon and related polytopes~\cite{ArestovBerdysheva2001, ArestovBerdysheva2002}, and convex bodies with spectral or translational tiling~\cite{KLM2025, KR2003, KR2006, Revesz2011}. Readers interested in the historical development of the Turán problem and its extensions to various settings may consult the survey by Révész in~\cite{Revesz2011}.
	
	For an open origin-symmetric bounded set $\Omega\subset\mathbb{R}$, a continuous function $f:\mathbb{R}\to\mathbb{C}$ is called Tur\'an admissible if it is positive definite, normalized by $f(0)=1$, and supp$f\subset\Omega$. The Tur\'an constant $\mathcal{T}_{\mathbb{R}}(\Omega)$ is the supremum of $\int f$ over all the Tur\'an admissible functions $f$ , i.e.
	\begin{equation}\label{eq:turan-def}
		\mathcal{T}_{\mathbb{R}}(\Omega)=\sup\left\{\int_{\mathbb{R}}f(x)\,dx:f\in C(\mathbb{R}),f\text{ is positive definite}, f(0)=1,\text{supp} f\subset\Omega\right\}.
	\end{equation}
	In one dimension, the exact Tur\'an constant is well-known for a single interval $\Omega=(-L,L)\subset\mathbb{R}$ where $L<\infty$, with $\mathcal{T}_{\mathbb{R}}((-L,L))=L$. It is achieved by the function $f=|\frac{\Omega}{2}|^{-1}\mathbf 1_\frac{\Omega}{2}*\mathbf 1_{-\frac{\Omega}{2}}$.
	
	Disconnected or non-convex settings also fit naturally into the locally compact abelian group framework of Kolountzakis--R\'ev\'esz~\cite{KR2006}. One interesting example is their one-dimensional result for an interval with two symmetric points removed: if $0<b<a\le 2b$ and
	\[
	\Omega=(-a,-b)\cup(-b,b)\cup(b,a),
	\]
	then the corresponding Tur\'an constant equals the Tur\'an constant for $(-b,b)$, namely $b$; see~\cite[Section~3.6]{KR2006}. 
	
	The main goal in our work is to characterize the Tur\'an constant for a union of three equal-length intervals on the real line.Without loss of generality, we work with the normalized one-parameter family of origin-symmetric open sets
	\begin{equation}\label{eq:Omega-lambda}
		\Omega_{\lambda}=(-1,1)\cup(\lambda-1,\lambda+1)\cup(-\lambda-1,-\lambda+1), \qquad \lambda\ge 0.
	\end{equation}
	To solve this problem, we establish a discrete approximation principle, which converts the continuous Tur\'an problems in one dimension into the limits of finite discrete Tur\'an problems.
	
	For a finite symmetric subset $A\subset\mathbb{Z}$ with $0\in A$, a complex sequence $(c_n)_{n\in\mathbb{Z}}$ is Tur\'an admissible (discrete) if the sequence is positive definite, satisfies $c_0=1$, and vanishes for all integers $n\notin A$. The discrete Tur\'an constant $\mathcal{T}_{\mathbb{Z}}(A)$ is then the supremum of the finite sum $\sum_{n\in A}c_n$ over all the discrete Tur\'an admissible sequences $(c_n)$, i.e.
	
	\begin{equation}\label{eq:TZ-def}
		\mathcal{T}_{\mathbb{Z}}(A)=\sup\left\{\sum_{n\in A}c_n:(c_n)_{n\in \mathbb{Z}} ~\text{is positive definite},~ c_0 = 1, ~c_n = 0 ~\text{for}~ n \notin A \right\}.
	\end{equation}

	We now state the results for the three-interval family $\Omega_\lambda$ in the order of their geometric and arithmetic features. The first case is the overlapping region, where the three intervals form a single interval.
	The explicit Turán constant for a single symmetric interval is a classical result due to Stechkin \cite{Stechkin1972} and Gabardo\cite{Gabardo2024}, which we can also resolve with the general methods developed throughout this paper. 
	
	\begin{theorem}\label{thm:connected}
		If $0\le\lambda < 2$, then $\mathcal{T}_{\mathbb{R}}(\Omega_\lambda)=1+\lambda$.
	\end{theorem}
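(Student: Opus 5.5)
For $0\le\lambda<2$ the three intervals overlap or touch in a connected way. Since $\lambda-1<1$, the set $\Omega_\lambda$ is the single open interval $(-\lambda-1,\lambda+1)$. When $\lambda\le 1$ the outer intervals lie in $[-2,2]$, and the union is still $(-(1+\lambda),1+\lambda)$. So the whole proof reduces to showing $\mathcal{T}_{\mathbb{R}}((-L,L))=L$ with $L=1+\lambda$, and I will prove the two inequalities separately.

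\textbf{Lower bound (construction).} I take $f=L^{-1}\,\mathbf 1_{(-L/2,L/2)}*\mathbf 1_{(-L/2,L/2)}$, i.e. $f(x)=(1-|x|/L)_+$. It is continuous and positive definite, since its Fourier transform is $L^{-1}|\widehat{\mathbf 1}_{(-L/2,L/2)}|^2\ge0$. It satisfies $f(0)=1$ and $\int f=L$. Its support is $[-L,L]$, which is not contained in the open set $\Omega_\lambda$. I fix this by dilation: the function $f_\varepsilon(x)=(1-|x|/((1-\varepsilon)L))_+$ is admissible with integral $(1-\varepsilon)L$. Letting $\varepsilon\to0$ gives $\mathcal{T}_{\mathbb{R}}(\Omega_\lambda)\ge 1+\lambda$.

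\textbf{Upper bound.} Let $f$ be admissible. Since $f$ is positive definite, $\hat f\ge0$ and $|f|\le f(0)=1$. The key step is a Siegel/periodization argument. Put $F(x)=\sum_{k\in\mathbb{Z}}f(x+kL)$. Because $\operatorname{supp} f$ is a compact subset of $(-L,L)$, the translates overlap only in finitely many terms and $F$ is continuous. By Poisson summation $F$ has Fourier coefficients $L^{-1}\hat f(m/L)\ge0$, so $F$ is positive definite on $\mathbb{R}/L\mathbb{Z}$. This gives
\[
\frac1L\int_{\mathbb{R}} f=\frac1L\hat f(0)\le \sum_m \frac1L\hat f(m/L)=F(0)=\sum_k f(kL).
\]
For $k\ne0$ we have $|kL|\ge L$, so $f(kL)=0$ because the support lies inside $(-L,L)$. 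Hence $F(0)=f(0)=1$, and therefore $\int f\le L$.

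To keep the argument rigorous, the use of Poisson summation needs $\sum_m \hat f(m/L)<\infty$. This follows from positivity together with the continuity of $F$ at $0$ (the Fejér mean argument: the Cesàro means converge to $F(0)$ and have nonnegative terms). I expect this to be the only delicate point, and it is standard. Alternatively, the upper bound can be obtained from the paper's discrete approximation principle together with the classical Fejér bound $\mathcal{T}_{\mathbb{Z}}(\{-N,\dots,N\})=N+1$, scaling by $L/N$. The direct Siegel argument above is shorter, so I would use it.
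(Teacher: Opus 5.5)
Your proof is correct and follows the paper's route. The paper also identifies $\Omega_\lambda$ with $(-(1+\lambda),1+\lambda)$ and proves $\mathcal{T}_{\mathbb{R}}((-L,L))=L$, with the triangle autocorrelation as the lower bound and periodization with period $L$ (using $f(kL)=0$ for $k\neq0$) as the upper bound. The only differences are small: the paper bounds $\frac1L\int_0^L P_L\le P_L(0)$ via $|P_L|\le P_L(0)$ rather than via the Fourier series at $0$, and your dilation to $(1-\varepsilon)L$ fixes a slip in the paper, whose $\phi_L$ has support $[-L,L]$ and so is not contained in the open interval.
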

	
	The case we are most interested in is the separated case, i.e. $\lambda>2$. When all endpoints of the set $\Omega_\lambda$ lie on a common rational lattice, and the original continuous Tur\'an problem is directly equivalent to a single finite discrete Tur\'an problem. Furthermore, the approximation principle established herein holds for arbitrary finite unions of symmetric intervals, rather than only unions of three intervals.
	
	\begin{theorem}\label{thm:rational}
		Let $\lambda=p/q>2$ with $p,q\in N$ and $\gcd(p,q)=1$. Let
		\begin{equation}\label{eq:Apq}
			A_{p,q}=\{n\in \mathbb{Z} :|n|<q\}\cup\{n\in\mathbb{Z}:|n-p|<q\}\cup\{n\in \mathbb{Z}:|n+p|<q\}.
		\end{equation}
		Then
		\begin{equation}\label{eq:rational-formula}
			\mathcal{T}_{\mathbb{R}}(\Omega_{p/q})=\frac{1}{q}\,T_{\mathbb{Z}}(A_{p,q}).
		\end{equation}
	\end{theorem}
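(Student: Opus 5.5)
The plan is to rescale so that all endpoints become integers, and then prove the two inequalities between the continuous and discrete problems separately, using convolution with the triangle function.

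\textbf{Step 1: rescaling.} If $f$ is Tur\'an admissible for $\Omega$, then $f(\cdot/q)$ is admissible for $q\Omega$ and has integral $q\int f$. The map also works in reverse, so $\mathcal{T}_{\mathbb{R}}(q\Omega)=q\,\mathcal{T}_{\mathbb{R}}(\Omega)$. Put
$$\Omega'=q\Omega_{p/q}=(-q,q)\cup(p-q,p+q)\cup(-p-q,-p+q).$$
This is a union of open intervals with integer endpoints, and $A_{p,q}=\Omega'\cap\mathbb{Z}$. It therefore suffices to prove $\mathcal{T}_{\mathbb{R}}(\Omega')=\mathcal{T}_{\mathbb{Z}}(A_{p,q})$. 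The argument below uses only the fact that the endpoints are integers, so it works for any finite union of symmetric intervals with integer endpoints. Let $\Delta(x)=(1-|x|)_+$, so that $\Delta=\mathbf 1_{[0,1]}*\mathbf 1_{[-1,0]}$ and $\widehat\Delta\ge0$.

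\textbf{Step 2: lower bound $\mathcal{T}_{\mathbb{R}}(\Omega')\ge\mathcal{T}_{\mathbb{Z}}(A_{p,q})$.} Take a discrete admissible sequence $(c_n)$ supported in $A_{p,q}$. For $0<\varepsilon<1$ let $\Delta_\varepsilon(x)=\Delta(x/(1-\varepsilon))$ and define
$$f(x)=\sum_{n\in A_{p,q}}c_n\Delta_\varepsilon(x-n).$$

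1. Its Fourier transform is $\big(\sum_n c_ne^{-2\pi i n\xi}\big)\widehat{\Delta_\varepsilon}(\xi)$. This is a product of a nonnegative trigonometric polynomial (by positive definiteness of $(c_n)$, via Herglotz/Fej\'er--Riesz) and a nonnegative integrable function. Hence $f$ is continuous and positive definite.
2. The bumps are disjoint at the integers, so $f(0)=c_0=1$.
3. Each $n\in A_{p,q}$ lies at distance at least $1$ from the boundary of $\Omega'$, because the endpoints are integers and the intervals are open. So $\operatorname{supp}\Delta_\varepsilon(\cdot-n)=[n-1+\varepsilon,\,n+1-\varepsilon]\subset\Omega'$.
4. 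Finally $\int f=(1-\varepsilon)\sum_n c_n$.

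Letting $\varepsilon\to0$ and taking the supremum over $(c_n)$ gives the lower bound.

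\textbf{Step 3: upper bound.} Let $f$ be admissible for $\Omega'$. Replacing $f$ by $\operatorname{Re}f$, which is still admissible with the same real integral, we may assume $f$ is real and $\int f>0$. Define
$$c_n=(f*\Delta)(n)=\int_0^1\!\!\int_0^1 f(n+s-t)\,ds\,dt.$$

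1. \emph{Positive definiteness.} For finitely many $a_j$,
$$\sum_{j,k}a_j\bar a_k c_{j-k}=\int_0^1\!\!\int_0^1\sum_{j,k}a_j\bar a_k f\big((j+s)-(k+t)\big)\,ds\,dt .$$
The integrand, together with the double integral, is nonnegative as a quadratic form in the points $\{j+s\}$, since $f$ is positive definite.
2. \emph{Sum.} $\sum_n c_n=\int f\cdot\int\Delta=\int f$.
3. \emph{Normalization.} $0\le c_0=\int f\Delta\le f(0)\int\Delta=1$, using $|f|\le f(0)$.
4. \emph{Support.} $c_n\ne0$ forces $(n-1,n+1)\cap\operatorname{supp}f\neq\emptyset$, hence $(n-1,n+1)\cap\Omega'\ne\emptyset$. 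Since the endpoints of $\Omega'$ are integers, this means $n\in A_{p,q}$. This is the key place where the lattice hypothesis is used.
5. \emph{Conclusion.} If $c_0=0$ then positive definiteness forces $c\equiv0$, which contradicts $\int f>0$. So $c_0>0$, and $(c_n/c_0)$ is discrete admissible with
$$\sum_n \frac{c_n}{c_0}\ \ge\ \sum_n c_n=\int f.$$
Hence $\int f\le\mathcal{T}_{\mathbb{Z}}(A_{p,q})$.

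\textbf{Main obstacle.} I expect the hardest part to be the support bookkeeping in both directions: the open-versus-closed support issue, handled by the $\varepsilon$-shrinking in Step 2, and the exact identification $\{n:(n-1,n+1)\cap\Omega'\ne\emptyset\}=A_{p,q}$ in Step 3. Both depend precisely on the endpoints lying in $\mathbb{Z}$ after scaling. This is also why $\gcd(p,q)=1$ is not essential beyond fixing the scaling $q$.
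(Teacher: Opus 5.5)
\textbf{Gap in Step 3.4.} Your rescaling and your lower bound (Step 2) are fine and coincide with the paper's argument. The paper uses step $h=1/q$ instead of rescaling, and lifts with shrunken triangles exactly as you do. The upper bound, however, fails at Step 3.4.

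If $(a,b)$ is a component of $\Omega'$ with $a,b\in\mathbb{Z}$, then $(n-1,n+1)\cap(a,b)\neq\emptyset$ means $a-1<n<b+1$, i.e.\ $a\le n\le b$. So the endpoints are included, and your $c$ is only supported in
$$\overline{\Omega'}\cap\mathbb{Z}=A_{p,q}\cup\{\pm q,\pm(p-q),\pm(p+q)\}.$$
These extra entries are genuinely nonzero. For the admissible triangle $f(x)=(1-|x|/(q(1-\varepsilon)))_+$ you get $c_q=\int_0^1 f(q-y)(1-y)\,dy>0$.

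Your argument therefore only yields $\mathcal{T}_{\mathbb{R}}(\Omega')\le\mathcal{T}_{\mathbb{Z}}(\overline{\Omega'}\cap\mathbb{Z})$, which is strictly weaker. For $\lambda=N\ge 3$, $q=1$, the nonnegative polynomial $(1+\cos t)(1+\cos Nt)$ has all its frequencies in $\{0,\pm1,\pm(N-1),\pm N,\pm(N+1)\}$ and value $4$ at $t=0$, whereas the true constant is $2$.

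More care with bookkeeping will not fix this. Any convolution with a kernel of positive width spreads $f$ across the integer endpoints. The exact identity $\sum_n c_n=\int f$ is bought precisely by enlarging the support by one lattice step.

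\textbf{Missing idea: sample instead of smoothing.} Replace the exact sum identity by an inequality. Put $c_n=f(n)$.
\begin{itemize}
\item It is positive definite, as the restriction of a positive definite function to $\mathbb{Z}$.
\item It has $c_0=f(0)=1$ exactly.
\item It satisfies $c_n=0$ for $n\notin\Omega'$, in particular at the integer endpoints, since $f$ vanishes off the open set $\Omega'$. So $c$ is supported in $\Omega'\cap\mathbb{Z}=A_{p,q}$, and this is where integrality of the endpoints is really used.
\end{itemize}
Then periodize. $P(x)=\sum_{k}f(x+k)$ is continuous and $1$-periodic, with Fourier coefficients $\widehat f(m)\ge 0$. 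Hence $|P(x)|\le P(0)$, and
$$\int_{\mathbb{R}}f=\int_0^1P\le P(0)=\sum_n f(n)\le\mathcal{T}_{\mathbb{Z}}(A_{p,q}).$$
This is exactly the paper's route: the periodization lemma plus sampling gives $\mathcal{T}_{\mathbb{R}}(\Omega)\le h\,\mathcal{T}_{\mathbb{Z}}(A^+_{\Omega,h})$, and then $A^+=A^-$ for lattice endpoints.

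\textbf{Minor point in Step 3.1.} The integrand is not, for fixed $(s,t)$, a quadratic form in a single point set, since the points $j+s$ and $k+t$ differ. Positive definiteness of your $c$ is true, but the reason is $\widehat{f*\Delta}=\widehat f\,\widehat\Delta\ge0$ followed by restriction to $\mathbb{Z}$.
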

	
	The formula above is valid only for rational $\lambda$. For arbitrary real number, we rely on a limiting discrete approximation principle.
	
	\begin{theorem}\label{thm:limit}
		Let $\lambda>2$ be arbitrary. For each integer $N \ge 1$, define
		\begin{equation}\label{eq:AN}
			A_N(\lambda)=\{n\in \mathbb{Z}:|n|\le N-1\}\cup\{n\in\mathbb{Z} :|n-N\lambda|\le N-1\}\cup\{n\in\mathbb{Z} :|n+N\lambda|\le N-1\}.
		\end{equation}
		Then
		\begin{equation}\label{eq:limit-formula}
			\mathcal{T}_{\mathbb{R}} (\Omega_\lambda)=\lim_{N\to\infty} \frac{1}{N} \mathcal{T}_{\mathbb{Z}} (A_N(\lambda)).
		\end{equation}
	\end{theorem}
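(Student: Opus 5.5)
We prove the two inequalities $\limsup_N \frac1N\mathcal{T}_{\mathbb{Z}}(A_N(\lambda))\le \mathcal{T}_{\mathbb{R}}(\Omega_\lambda)$ and $\liminf_N \frac1N\mathcal{T}_{\mathbb{Z}}(A_N(\lambda))\ge \mathcal{T}_{\mathbb{R}}(\Omega_\lambda)$ separately. Both directions use a transference between positive definite functions on $\mathbb{R}$ and positive definite sequences on $\mathbb{Z}$, with a small dilation of $\Omega_\lambda$ to absorb the discretization error. We also use a continuity-in-dilation fact: since $\Omega_\lambda$ is a finite union of open intervals, $\mathcal{T}_{\mathbb{R}}((1-\varepsilon)\Omega_{\lambda'})\to\mathcal{T}_{\mathbb{R}}(\Omega_\lambda)$ as $\varepsilon\to0$, $\lambda'\to\lambda$. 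The containment direction of this is immediate: any admissible $f$ has compact support inside $\Omega_\lambda$, so $f((1+\delta)\,\cdot)$ is admissible for a slightly smaller set and loses only a factor $(1+\delta)^{-1}$ of the integral.

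\emph{Discrete to continuous (upper bound on the limsup).} Let $(c_n)$ be admissible for $A_N(\lambda)$ with $\sum c_n$ close to $\mathcal{T}_{\mathbb{Z}}(A_N(\lambda))$. Define
\[
f(x)=\sum_n c_n\,\Delta\bigl(N(x-n/N)\bigr),\qquad \Delta(t)=(1-|t|)_+ .
\]
Since $\Delta=\mathbf 1_{[-1/2,1/2]}*\mathbf 1_{[-1/2,1/2]}$, we get $\hat f(\xi)=\frac1N\hat\Delta(\xi/N)\,P(\xi/N)$ with $P(t)=\sum c_n e^{-2\pi i n t}\ge0$ and $\hat\Delta\ge0$. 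Hence $f$ is positive definite, $f(0)=c_0=1$, and $\int f=\frac1N\sum c_n$.

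For the support, $n\in A_N(\lambda)$ means $n/N$ lies within $(N-1)/N$ of $0$ or of $\pm\lambda$. Therefore $\operatorname{supp} f\subset\{x:\operatorname{dist}(x,\{0,\pm\lambda\})\le (N-1)/N+1/N=1\}$, which is the \emph{closure} of $\Omega_\lambda$. This is the delicate point. I would fix it by a slight compression: $f((1+\delta)x)$ has support in $\overline{\Omega_{\lambda/(1+\delta)}}$ scaled by $1/(1+\delta)$, which lies inside the open set $\Omega_{\lambda'}$ for a nearby $\lambda'$. Alternatively, use a narrower tent $\Delta(N'x)$ with $N'>N$ and pay a factor $N/N'$. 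Either way,
\[
\frac1N\mathcal{T}_{\mathbb{Z}}(A_N(\lambda))\le (1+o(1))\,\mathcal{T}_{\mathbb{R}}(\Omega_{\lambda'}),
\]
and the continuity fact finishes this direction.

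\emph{Continuous to discrete (lower bound on the liminf).} Let $f$ be admissible for $\Omega_\lambda$. After compression we may assume $\operatorname{supp} f\subset\{\operatorname{dist}(x,\{0,\pm\lambda\})\le 1-2\eta\}$ with only a small loss in $\int f$. Mollify by $g=f*\varphi_\eta*\tilde\varphi_\eta$ and normalize $g(0)$; this stays positive definite and keeps support within distance $1-\eta$ of $\{0,\pm\lambda\}$. Set $c_n=g(n/N)/g(0)$. By Poisson summation, $\sum_n c_n e^{-2\pi i n t}=\frac{N}{g(0)}\sum_k\hat g(N(t+k))\ge0$, so $(c_n)$ is positive definite. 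For $N>1/\eta$, any $n$ with $c_n\neq0$ satisfies $|n-m N\lambda|\le N-1$ for some $m\in\{0,\pm1\}$, so $n\in A_N(\lambda)$. Riemann sums give $\frac1N\sum c_n\to\int g/g(0)$, which is close to $\int f$. Hence $\liminf_N\frac1N\mathcal{T}_{\mathbb{Z}}(A_N(\lambda))\ge\mathcal{T}_{\mathbb{R}}(\Omega_\lambda)-\varepsilon$.

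The main obstacle is the upper-bound boundary issue. Because $N\lambda$ need not be an integer, the interval constraint $|n\mp N\lambda|\le N-1$ combined with the tent width $1/N$ lands exactly on the closed set. I therefore need the continuity of $\mathcal{T}_{\mathbb{R}}$ under shrinking the parameters, in the direction that bounds things from above: $\mathcal{T}_{\mathbb{R}}$ of a slightly enlarged or perturbed set must be close to $\mathcal{T}_{\mathbb{R}}(\Omega_\lambda)$. This is not automatic, and I would try to prove it via a dilation argument. A function supported in $\overline{\Omega_\lambda}$ can be compressed by $x\mapsto(1+\delta)x$ into $\Omega_{\lambda/(1+\delta)}$ widened only slightly. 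Because the gaps between the components are open for $\lambda>2$, one can also shift and rescale the structure, using that the tent construction has slack $(N-1)/N$. This should yield a bound in terms of $\mathcal{T}_{\mathbb{R}}(\Omega_{\lambda})$ itself rather than a neighbouring parameter.
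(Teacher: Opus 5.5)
There is a genuine gap in your upper bound on the $\limsup$ (your $\liminf$ direction is fine). As written, the bound rests on the ``continuity fact'' $\mathcal{T}_{\mathbb{R}}((1-\varepsilon)\Omega_{\lambda'})\to\mathcal{T}_{\mathbb{R}}(\Omega_\lambda)$, which you never prove and which is not available in general. The compression $x\mapsto(1+\delta)x$ also moves the side blocks to $\pm\lambda/(1+\delta)$. So you land in $\Omega_{\lambda'}$ with $\lambda'<\lambda$, and you would need $\limsup_{\lambda'\to\lambda^-}\mathcal{T}_{\mathbb{R}}(\Omega_{\lambda'})\le\mathcal{T}_{\mathbb{R}}(\Omega_\lambda)$. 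The only easy inequality goes the other way. Since $c\,\Omega_\mu\subset\Omega_{c\mu}$ for $0<c\le1$ and $\mathcal{T}_{\mathbb{R}}(c\Omega)=c\,\mathcal{T}_{\mathbb{R}}(\Omega)$, one gets $\mathcal{T}_{\mathbb{R}}(\Omega_{\lambda'})\ge(\lambda'/\lambda)\,\mathcal{T}_{\mathbb{R}}(\Omega_\lambda)$ for $\lambda'<\lambda$. Left upper semicontinuity actually fails in this family. At $\lambda=2$ the left limit is $3$ by Theorem~\ref{thm:connected}, while Corollary~\ref{cor:rational-endpoints} and the argument of Proposition~\ref{cor:integer} give $\mathcal{T}_{\mathbb{R}}(\Omega_2)=\mathcal{T}_{\mathbb{Z}}(\{0,\pm2\})=2$. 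So no soft continuity argument will close this direction, and your last paragraph correctly flags it as unresolved.

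The fix is the alternative you mention in passing and then misread. A narrower tent keeps the centres $n/N$, so it lands in $\Omega_\lambda$ itself. For $N'>N$ put $f_{N'}(x)=\sum_n c_n\,\Delta\bigl(N'(x-n/N)\bigr)$. Then:
\begin{itemize}
\item $\widehat{f_{N'}}(\xi)=\frac1{N'}\widehat{\Delta}(\xi/N')P(\xi/N)\ge0$;
\item $f_{N'}(0)=c_0=1$, since $N'|n|/N>1$ for $n\ne0$;
\item the support satisfies
\[
\operatorname{supp} f_{N'}\subset\Bigl\{x:\operatorname{dist}(x,\{0,\pm\lambda\})\le \tfrac{N-1}{N}+\tfrac1{N'}\Bigr\},
\]
which is a compact subset of $\Omega_\lambda$ because $\frac{N-1}{N}+\frac1{N'}<1$.
\end{itemize}
Hence $\frac1{N'}\sum_n c_n=\int f_{N'}\le\mathcal{T}_{\mathbb{R}}(\Omega_\lambda)$. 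Letting $N'\downarrow N$ gives $\frac1N\mathcal{T}_{\mathbb{Z}}(A_N(\lambda))\le\mathcal{T}_{\mathbb{R}}(\Omega_\lambda)$ for every $N$, with no continuity in $\lambda$ needed. This is exactly Step~1 of the proof of Theorem~\ref{thm:sandwich}, with tent $\phi_{(1-\epsilon)h}$ and $h=1/N$, which the paper feeds into Theorem~\ref{thm:finite-union-limit}.

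Your $\liminf$ argument differs mildly from the paper's. You use that $\operatorname{supp}f$ is a compact subset of $\Omega_\lambda$, so for large $N$ every nonzero sample already has index in $A_N(\lambda)$. That makes the paper's $\eta_N$ correction unnecessary, and no compression is needed to get the margin. On the other hand, your mollification and Poisson summation are superfluous. Samples of a positive definite function form a positive definite sequence directly from the definition (Lemma~\ref{lem:conti-discete}).
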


	Beyond the explicit formulas derived above, we further characterize the qualitative global behavior of $\mathcal{T}_{\mathbb{R}}(\Omega_\lambda)$ across the full parameter range $\lambda\ge0$. At the critical touching threshold $\lambda=2$, the Tur\'an constant exhibits a pronounced jump discontinuity: 
	$$\lim_{\lambda\to 2^+} \mathcal{T}_{\mathbb{R}} (\Omega_\lambda) = 2 < 3 = \lim_{\lambda\to 2^-} \mathcal{T}_{\mathbb{R}}(\Omega_\lambda),$$
	meaning the Tur\'an constant fails to be continuous at this geometric transition point. For integer number $N\ge3$, we have identically $\mathcal{T}_{\mathbb{R}}(\Omega_N)=2$. Notably, this lower bound $2$ is universal but not tight for all $\lambda>2$; we construct an explicit feasible trigonometric polynomial to verify that the non-integer rational case $\lambda=5/2$ yields the Tur\'an constant is strictly larger than $2$. To complete the global analysis of separated configurations, we establish an upper bound for all $\lambda > 2$ showing that every separated three-interval family satisfies $\mathcal{T}_{\mathbb{R}}(\Omega_\lambda)\le(\lambda+2)/2$ and we derive a refined explicit upper bound that improves upon the trivial linear estimate for all $\lambda>5$.
	
	The paper is organized as follows. Section 2 establishes the fundamental connection between continuous Turán problems on $\mathbb{R}$ and discrete Turán problems on $\mathbb{Z}$. 
	Section 3 applies the approximation theory developed in Section 2 to our three-interval domain $\Omega_\lambda$, furnishing rigorous proofs for Theorems 1.1, 1.2 and 1.3. In Section 4 we reformulate discrete Turán constants as semidefinite programming (SDP) optimization problems, then present two concrete illustrative examples: integer parameters and the rational case $\lambda=5/2$, which demonstrates that non-integer rational $\lambda$ can yield Turán constants strictly larger than 2. Section 5 analyzes the qualitative behavior of $\mathcal{T}_{\mathbb{R}}(\Omega_\lambda)$ across the parameter $\lambda$, including the discontinuity at the critical threshold $\lambda=2$, universal lower bound $2$ for all separated configurations $\lambda>2$ and explicit improved upper bounds valid for \(\lambda>5\) stated in Proposition 5.3. Section 6 concludes the paper with a discussion of open problems and prospective generalizations to multiple separated symmetric interval blocks. 
	
	\section{Connection between the continuous and discrete Tur\'an problem}
	The main result of the present investigation is perhaps the understanding that the Tur\'an extremal problems on a union of intervals are in fact equivalent to the Tur\'an problems on some lattice set.  We also obtain a limiting relation between these two problems. For every $f \in L^1(\mathbb{R})$, we  define its Fourier transform by the formula 
	$$
	\widehat{f} (\xi) =\int_{\mathbb{R}} f(x) e^{-2 \pi i \xi x} dx.
	$$
	A continuous function $f:\mathbb{R} \to \mathbb{C} $ is positive definite if
	$$
	\sum_{j,k=1}^m z_j\overline{z_k}f(x_j-x_k)\ge0
	$$
	for all finite choices of $x_1,\cdots,x_m\in \mathbb{R} $ and $z_1,\cdots,z_m\in\mathbb{C} $. Bochner’s theorem states that the positive definiteness of an integrable function $f$ is equivalent to the positivity of its Fourier transform, i.e. $\widehat{f} \geq 0$. For more details we can refer to Bochner~\cite{Bochner1932}, Rudin~\cite{Rudin1962}, and Sasv\'ari~\cite{Sasvari1994}. The following are several simple properties of positive definite functions.

	\begin{lemma}\label{lem:elementary-pd} $($\cite{Sasvari2013}$)$
		Let $f$ be a positive definite function on $\mathbb{R}$.
		\begin{enumerate}
			\item $f$ is Hermitian, i.e. $f(-x)=\overline{f(x)}$.
			\item $|f(x)|\le f(0)$.
			\item The functions $\overline{f}, \operatorname{Re} f,\left|f\right|^2$are positive definite. 
			\item If $g$ is another positive definite function on $\mathbb{R}$, then $p_1 f+p_2 g$ is positive definite for all $p_1, p_2 \geq 0$.
		\end{enumerate}
	\end{lemma}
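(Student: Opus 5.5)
All four properties follow from the defining inequality
$$\sum_{j,k=1}^m z_j\overline{z_k}f(x_j-x_k)\ge 0$$
by choosing $m$, the points $x_j$ and the coefficients $z_j$ suitably. No Fourier analysis is needed, since $f$ is only assumed continuous and positive definite, not integrable. I would prove the items in the order listed.

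\emph{Items (1) and (2).} Take $m=1$: this gives $f(0)\ge 0$. Take $m=2$, with points $x_1=x$, $x_2=0$ and arbitrary $z_1,z_2\in\mathbb{C}$. The quadratic form becomes
$$(|z_1|^2+|z_2|^2)f(0)+z_1\overline{z_2}f(x)+z_2\overline{z_1}f(-x)\ge 0.$$
Since the left side is real, the $2\times2$ matrix
$$\begin{pmatrix} f(0)&f(x)\\ f(-x)&f(0)\end{pmatrix}$$
is a Hermitian positive semidefinite matrix. To see the Hermitian property concretely, plug in $(z_1,z_2)=(1,1)$ and $(1,i)$. This shows that $f(x)+f(-x)$ and $i(f(-x)-f(x))$ are real, which yields $f(-x)=\overline{f(x)}$. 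Nonnegativity of the determinant then gives $f(0)^2-|f(x)|^2\ge0$, that is, $|f(x)|\le f(0)$.

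\emph{Item (3).} For $\overline f$, conjugate the defining inequality. This gives $\sum \overline{z_j} z_k \overline{f}(x_j-x_k)\ge0$, which is the positive definiteness condition for $\overline f$ with coefficients $\overline{z_j}$. Then
$$\operatorname{Re} f=\tfrac12(f+\overline f)$$
is positive definite by the cone property, item (4), which I would prove first or in parallel.

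For $|f|^2=f\,\overline f$, I would use the Schur product theorem. Fix points $x_1,\dots,x_m$. The matrices $M=(f(x_j-x_k))_{j,k}$ and $\overline M$ are positive semidefinite Hermitian, so their entrywise (Hadamard) product $(|f(x_j-x_k)|^2)_{j,k}$ is positive semidefinite. This is exactly the positive definiteness of $|f|^2$. Continuity of $\overline f$, $\operatorname{Re} f$ and $|f|^2$ is immediate.

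\emph{Item (4).} The form for $p_1f+p_2g$ equals $p_1$ times the form for $f$ plus $p_2$ times the form for $g$. Both of these are nonnegative, so the sum is nonnegative when $p_1,p_2\ge0$.

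The only nontrivial ingredient is the Schur product theorem in item (3). I would cite it, or prove it briefly: write $\overline M=\sum_l v_lv_l^*$ as a sum of rank-one terms, so that
$$M\circ \overline M=\sum_l D_{v_l}MD_{v_l}^*\succeq0,$$
where $D_{v}$ denotes the diagonal matrix with diagonal $v$. Everything else is a direct substitution into the defining inequality.
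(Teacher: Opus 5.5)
Your proof is correct and complete: the $m=1,2$ substitutions give (1) and (2), conjugating the defining inequality together with the cone property (4) handles $\overline f$ and $\operatorname{Re} f$, and the Schur product theorem gives $|f|^2$ (your rank-one sketch of it is also correct). The paper does not prove this lemma at all and only cites Sasv\'ari's monograph, where these facts are obtained by this same standard argument, so there is nothing further to compare.
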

	
	As a consequence of Lemma \ref{lem:elementary-pd} and the definition of $\mathcal{T}(\Omega)$, one may restrict the functions in \eqref{eq:turan-def} to real-valued, even, continuous positive definite functions, which we will only consider in the following content.

	For functions with discrete support sets, that is, for sequences, we could similarly define the positive definite sequences.

	\begin{definition}
		A sequence $c=(c_m)_{m\in\mathbb{Z}}$ is positive definite if for every finite set $\{m_1,\cdots,m_r\} \subset \mathbb{Z}$ and every $z_1,\cdots,z_r\in\mathbb{C}$,
		$$\sum_{j,k=1}^r z_j \overline{z_k} c_{m_j-m_k}\geq 0.$$
	\end{definition}
	
	\begin{lemma}\label{lem:finite-herglotz}
		Let $c=(c_m)_{m\in\mathbb{Z}}$ be finitely supported, which means the set $\{ m \in \mathbb{Z} : c_m \neq 0\}$ is a finite set. Then $c$ is positive definite if and only if
		\begin{equation}\label{eq:trigpoly-positive}
			P_c(t)=\sum_{m\in\mathbb{Z}}c_m e^{imt}\ \ge\ 0\qquad\text{for every }t\in\mathbb{R}.
		\end{equation}
	\end{lemma}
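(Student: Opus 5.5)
The plan is to prove the two directions separately, using Fejér's identity in one direction and the Fourier coefficients of a trigonometric polynomial in the other. Both directions need a good description of the positive semidefinite quadratic form. Set $M=\max\{|m|:c_m\neq 0\}$, which is finite because $c$ is finitely supported.

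\emph{Sufficiency.} Assume $P_c\ge 0$ on $\mathbb{R}$. Then $P_c$ is a trigonometric polynomial of degree at most $M$, so for every $m\in\mathbb{Z}$
\[
c_m=\frac{1}{2\pi}\int_0^{2\pi}P_c(t)e^{-imt}\,dt .
\]
Take a finite set $\{m_1,\dots,m_r\}\subset\mathbb{Z}$ and scalars $z_1,\dots,z_r\in\mathbb{C}$. Substituting this formula into the quadratic form gives
\[
\sum_{j,k}z_j\overline{z_k}\,c_{m_j-m_k}=\frac{1}{2\pi}\int_0^{2\pi}P_c(t)\Bigl|\sum_j z_j e^{-im_jt}\Bigr|^2dt\ \ge 0.
\]
Hence $c$ is positive definite. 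No difficulty arises in this direction; the only thing to check is that the orthogonality relations $\frac{1}{2\pi}\int_0^{2\pi}e^{i(n-m)t}\,dt=\delta_{nm}$ justify the integral representation.

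\emph{Necessity.} Assume $c$ is positive definite and fix $t\in\mathbb{R}$. For each $N\ge 1$, apply the definition to the points $\{0,1,\dots,N-1\}$ with weights $z_j=e^{ijt}$. This gives
\[
0\le \frac1N\sum_{j,k=0}^{N-1}c_{j-k}e^{i(j-k)t}=\sum_{|m|<N}\Bigl(1-\frac{|m|}{N}\Bigr)c_m e^{imt},
\]
where the identity follows by grouping the terms according to $m=j-k$: the value $m$ occurs exactly $N-|m|$ times. Since $c_m=0$ for $|m|>M$, once $N>M$ the right-hand side equals
\[
P_c(t)-\frac1N\sum_{|m|\le M}|m|\,c_m e^{imt}.
\]
The correction term is at most $\frac1N\sum_{|m|\le M}|m||c_m|$ in absolute value, so it tends to $0$ as $N\to\infty$. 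Therefore $P_c(t)\ge 0$.

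Implicitly we also need $P_c(t)$ to be real, and this comes for free. Positive definiteness forces $c_{-m}=\overline{c_m}$: take the points $\{0,m\}$ with arbitrary weights, and note that a matrix yielding a nonnegative, hence real, quadratic form is Hermitian. Alternatively, the displayed Fejér means are real for every $N$, so their limit $P_c(t)$ is real.

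The main (mild) obstacle is this Fejér-kernel averaging step. It is needed because one finite quadratic form does not directly give $P_c(t)$; one has to take the limit of the means, and the finiteness of the support is exactly what makes the error term vanish uniformly in $t$.
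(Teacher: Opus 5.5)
Your proof is correct. It differs from the paper mainly in that the paper gives no argument: it invokes the lemma as the finite case of the Herglotz--Bochner theorem, citing Herglotz, Rudin and Katznelson.

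Along the cited route, one represents a positive definite $c$ as the Fourier--Stieltjes coefficients of a positive measure $\mu$ on the circle. Uniqueness of those coefficients then forces $\mu=P_c(t)\,dt/2\pi$ when $c$ is finitely supported, so $P_c\ge 0$. The standard proof of Herglotz's theorem itself goes through exactly your Fej\'er means $\sum_{|m|<N}(1-|m|/N)c_m e^{imt}\ge 0$, followed by a weak-$*$ compactness (Helly selection) step to extract the measure.

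Your argument is that proof with the compactness and measure theory removed. Finite support makes the Fej\'er means differ from $P_c(t)$ by an explicit $O(1/N)$ term, so the limit is a pointwise one. The converse is the single identity $\sum_{j,k}z_j\overline{z_k}c_{m_j-m_k}=\frac{1}{2\pi}\int_0^{2\pi}P_c(t)\bigl|\sum_j z_je^{-im_jt}\bigr|^2dt$.

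What the citation buys is generality: it covers arbitrary positive definite sequences, not just finitely supported ones. What your route buys is a short, self-contained proof. It also makes explicit why $P_c$ is real, via the Hermitian symmetry $c_{-m}=\overline{c_m}$ or the reality of the Fej\'er means, a point the paper passes over.
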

	
	This lemma is the finite form of the Herglotz--Bochner theorem; see Herglotz~\cite{Herglotz1911}, Rudin~\cite[Section~1.4]{Rudin1962}, or Katznelson~\cite[Chapter~I]{Katznelson2004}. Moreover, positive definite sequences have many properties similar to those of positive definite functions, such as $c_{-m} = \overline{c_m}$ and $|c_m| \leq c_0$.

	Next lemma is an important observations in our paper showing that every positive definite function corresponds to a large amount of positive definite sequences.
	
	\begin{lemma}\label{lem:conti-discete}
		Let $f$ be a positive definite function on $\mathbb{R}$. For every $h>0$, the sampled sequence
		$$c_m=f(mh),\qquad \forall~ m\in\mathbb{Z},$$
		is positive definite.
	\end{lemma}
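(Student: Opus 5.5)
This follows directly from the definition, with no Fourier analysis needed. Fix $h>0$, a finite set $\{m_1,\dots,m_r\}\subset\mathbb{Z}$ and coefficients $z_1,\dots,z_r\in\mathbb{C}$. We must show that
$$\sum_{j,k=1}^r z_j\overline{z_k}\,c_{m_j-m_k}\ge 0.$$

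Put $x_j=m_jh\in\mathbb{R}$ for $j=1,\dots,r$. Then $(m_j-m_k)h=x_j-x_k$, so $c_{m_j-m_k}=f(x_j-x_k)$, and the quadratic form above equals
$$\sum_{j,k=1}^r z_j\overline{z_k}\,f(x_j-x_k).$$
This is nonnegative by the definition of positive definiteness of $f$, applied to the points $x_1,\dots,x_r$ and the coefficients $z_1,\dots,z_r$.

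There is one small point to check. The definition of positive definite functions is stated for arbitrary finite choices of points. If some of the $m_j$ coincide, the corresponding $x_j$ also coincide, and the definition still applies. Alternatively, one can merge repeated indices by summing their coefficients, which leaves the form unchanged. So there is no real obstacle here, and the lemma is an immediate restriction of the defining inequality to the subgroup $h\mathbb{Z}$.

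As a remark, not needed for the proof: if $f$ is integrable with compact support, the same conclusion can be checked via Lemma \ref{lem:finite-herglotz}. By Poisson summation,
$$\sum_m f(mh)e^{imt}=\frac1h\sum_k \widehat f\Big(\frac{k}{h}-\frac{t}{2\pi h}\Big)\ge0,$$
by Bochner's theorem. The direct argument above is preferable because it avoids any convergence issues.
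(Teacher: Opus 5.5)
Your argument is correct and is exactly the paper's proof: substituting $x_j=m_jh$ turns the quadratic form for $(c_m)$ into the defining quadratic form of $f$ at the points $m_jh$, which is nonnegative. Your Poisson-summation remark is a valid alternative check when $f$ is integrable, but, as you say, it is not needed.
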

	\begin{proof}
		For arbitrary $r$ integers $m_1,\cdots,m_r$ and complex numbers $z_1,\cdots,z_r$, we have 
		$$
		\sum_{j,k=1}^r z_j\overline{z_k}c_{m_j-m_k}
		=\sum_{j,k=1}^r z_j\overline{z_k}f\bigl((m_j-m_k)h\bigr)\ge 0
		$$
		since $f$ is a positive definite function. 
	\end{proof}
	
	\subsection{The triangular kernel and periodization}
	
	For $h>0$, we define 
	$$\phi_h(x)=\Bigl(1-\tfrac{|x|}{h}\Bigr)_+.$$
	Then
	\begin{equation}\label{eq:triangle-properties}
		\widehat{\phi_h} \geq 0,\quad \operatorname{supp}\phi_h=[-h,h],\quad\phi_h(0)=1,\quad\int_\mathbb{R}\phi_h(x)\,dx=h.
	\end{equation}
	Indeed,
	$$\phi_h=\frac{1}{h} 1_{(-h/2,h/2)} \ast 1_{(-h/2,h/2)}.$$
	Thus $\phi_h$ is an autocorrelation and is positive definite.
	
	\begin{lemma}\label{lem:periodization}
		Let $f\in C_c(\mathbb{R})$ be positive definite and $h>0$. Define $P_h(x)=\sum_{k\in\mathbb{Z}}f(x+kh)$. Then $P_h$ is a continuous positive definite function, and
		\begin{equation}\label{eq:periodization-ineq}
			\int_\mathbb{R} f(x) dx \leq h P_h(0) = h \sum_{k\in\mathbb{Z}} f(kh).
		\end{equation}
	\end{lemma}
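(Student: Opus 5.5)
Since $f$ is continuous with compact support, the sum $P_h(x)=\sum_{k\in\mathbb{Z}}f(x+kh)$ has only finitely many nonzero terms on any bounded set of $x$. Hence $P_h$ is a continuous $h$-periodic function. The plan is to compute its Fourier coefficients and identify them with samples of $\widehat f$. For $n\in\mathbb{Z}$, unfolding the periodization gives
\[
\frac1h\int_0^h P_h(x)e^{-2\pi i n x/h}\,dx=\frac1h\int_{\mathbb{R}} f(x)e^{-2\pi i n x/h}\,dx=\frac1h\widehat f(n/h).
\]
The interchange of sum and integral is harmless because only finitely many terms are nonzero on $[0,h]$. By Bochner's theorem, $\widehat f\ge 0$ (here $f\in L^1$ since it is compactly supported and continuous). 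So all Fourier coefficients of $P_h$ are nonnegative.

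Positive definiteness of $P_h$ then follows from nonnegativity of its Fourier coefficients. For any points $x_j$ and scalars $z_j$, we want
\[
\sum_{j,k}z_j\overline{z_k}P_h(x_j-x_k)=\sum_n a_n\Bigl|\sum_j z_je^{2\pi i n x_j/h}\Bigr|^2\ge0,
\]
where $a_n=\frac1h\widehat f(n/h)\ge 0$. This requires representing $P_h$ pointwise by its Fourier series, which is the main technical point. I would handle it with Fejér means. The Cesàro means $\sigma_M P_h$ converge uniformly to $P_h$, since $P_h$ is continuous and periodic. Each mean is a trigonometric polynomial with nonnegative coefficients $(1-|n|/(M+1))a_n$, and is therefore positive definite. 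Positive definiteness is preserved under pointwise limits, so $P_h$ is positive definite. As a byproduct, $\sum_n a_n=\lim_M\sigma_MP_h(0)=P_h(0)$; the partial Fejér sums are monotone in $M$ when the $a_n$ are nonnegative, so the series converges.

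For the inequality, use the previous identity: $P_h(0)=\sum_n a_n\ge a_0=\frac1h\widehat f(0)=\frac1h\int_{\mathbb{R}}f$. Multiplying by $h$ gives $\int f\le hP_h(0)=h\sum_k f(kh)$. The step needing the most care is the convergence $\sum a_n=P_h(0)$. Fejér's theorem handles it cleanly, but an alternative is to invoke Lemma \ref{lem:conti-discete}-style arguments only if the Fejér route fails. I expect Fejér plus nonnegativity of the coefficients to suffice.
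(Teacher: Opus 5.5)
Your proof is correct. Its first step is the paper's: unfolding the periodization gives $\widehat{P_h}(n)=h^{-1}\widehat f(n/h)\ge 0$.

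The arguments then diverge. The paper simply asserts that nonnegative Fourier coefficients make $P_h$ positive definite. It then bounds the mean of $P_h$ by its value at the origin, $h^{-1}\int_{\mathbb{R}} f=\widehat{P_h}(0)=h^{-1}\int_0^h P_h(x)\,dx\le P_h(0)$, using $|P_h(x)|\le P_h(0)$ from Lemma~\ref{lem:elementary-pd}. You instead use Fej\'er means to prove positive definiteness and to show $\sum_n a_n=P_h(0)$, and then drop every term except $a_0$.

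The paper's final step is shorter, but it rests on the positive-definiteness claim. That claim silently needs a summability argument like yours, because the Fourier series of a merely continuous periodic function need not converge pointwise. Your version fills that gap. The identity $\sum_n a_n=P_h(0)<\infty$ also gives at once the absolutely convergent expansion $P_h(x)=\sum_n a_n e^{2\pi i n x/h}$, which is the representation invoked in Lemma~\ref{lem:near-touching-upper}.

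For the inequality alone you need less than the full identity. Since $\sigma_M P_h(0)\ge a_0$ for every $M$, letting $M\to\infty$ already gives $P_h(0)\ge a_0$. The hedged fallback to Lemma~\ref{lem:conti-discete} in your last sentences can be deleted.
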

	
	\begin{proof}
		Note that the function $P_h$ is bounded and $h$-periodic. The boundness of $P_h$ is from that the function $f$ has compact support set which is a bounded and closed set. Then we obtain the Fourier coefficients of $P_h$ as 
		\begin{align*}
			\widehat{P_h} (m) &= \frac{1}{h} \int_0^h P_h(x) e^{-2\pi i mx/h} dx\\
			&=\sum_{k\in\mathbb{Z}} \frac{1}{h} \int_0^h f(x+kh) e^{-2\pi i mx/h} dx\\
			&=\sum_{k\in\mathbb{Z}} \frac{1}{h} \int_{kh}^{(k+1)h} f(t) e^{-2\pi i mx/h} dt\\
			&=\frac{1}{h} \int_{\mathbb{R}} f(t) e^{-2\pi i mx/h} dt\\
			&=\frac{1}{h} \widehat{f} (\frac{m}{h}).
		\end{align*}
		It follows that $\widehat{P_h} (m) \geq 0$ for all $m \in \mathbb{Z}$ and the function $P_h$ is positive definite. This implies that 
		$$\frac{1}{h}\int_\mathbb{R} f(x)dx = \widehat{P_h} (0) = \frac{1}{h}\int_0^h P_h(x)dx \leq \frac{1}{h}\int_0^h P_h(0)dx = P_h(0).$$
	\end{proof}

	Now we use lemma \ref{lem:periodization} to prove the Tur\'an extremal problem on intervals, i.e. Theorem \ref{thm:connected}.
	
	\begin{proposition}\label{lem:interval}
		For $L>0$, $\mathcal{T}_{\mathbb{R}}((-L,L))=L$.
	\end{proposition}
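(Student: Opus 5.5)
The proof splits into a lower bound, obtained from a near-extremal triangular kernel, and an upper bound, obtained from Lemma~\ref{lem:periodization} with period $h=L$.

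\textbf{Lower bound.} For $0<\varepsilon<L$ we use the triangular kernel $\phi_{L-\varepsilon}$ from \eqref{eq:triangle-properties}. It is continuous and positive definite, satisfies $\phi_{L-\varepsilon}(0)=1$, and has
\[
\operatorname{supp}\phi_{L-\varepsilon}=[-(L-\varepsilon),L-\varepsilon]\subset(-L,L).
\]
So $\phi_{L-\varepsilon}$ is Tur\'an admissible for $(-L,L)$, and $\int\phi_{L-\varepsilon}=L-\varepsilon$. Letting $\varepsilon\to0$ gives $\mathcal{T}_{\mathbb{R}}((-L,L))\ge L$.

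This shows the supremum need not be attained when the domain is open. The kernel $\phi_L$ itself has support $[-L,L]$, which is not contained in $(-L,L)$, so only the approximation is available.

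\textbf{Upper bound.} Let $f$ be Tur\'an admissible for $(-L,L)$. We may take $f$ real and even, as noted after Lemma~\ref{lem:elementary-pd}. Since $\operatorname{supp} f$ is a closed subset of the open interval $(-L,L)$, it is compact and contained in $[-L+\delta,L-\delta]$ for some $\delta>0$. In particular $f\in C_c(\mathbb{R})$.

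We apply Lemma~\ref{lem:periodization} with $h=L$. For every integer $k\neq0$ we have $|kL|\ge L$, so $kL$ lies outside the support and $f(kL)=0$. Therefore \eqref{eq:periodization-ineq} gives
\[
\int_{\mathbb{R}} f\le L\sum_{k\in\mathbb{Z}} f(kL)=L\,f(0)=L.
\]
Taking the supremum over admissible $f$ yields $\mathcal{T}_{\mathbb{R}}((-L,L))\le L$.

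\textbf{Main obstacle.} There is essentially none, since the work is already done in Lemma~\ref{lem:periodization}. The one point needing care is that the support of an admissible $f$ must stay strictly inside $(-L,L)$. This is exactly what makes every nonzero sample $f(kL)$ vanish, and it is also why the lower bound is only reached in the limit rather than attained.

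For Theorem~\ref{thm:connected}: when $0\le\lambda<2$, the three intervals overlap and $\Omega_\lambda=(-1-\lambda,1+\lambda)$. Applying the proposition with $L=1+\lambda$ gives $\mathcal{T}_{\mathbb{R}}(\Omega_\lambda)=1+\lambda$.
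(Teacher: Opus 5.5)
Your proof is correct and follows the same route as the paper: a triangular kernel gives the lower bound, and Lemma~\ref{lem:periodization} with $h=L$ (so that $f(kL)=0$ for $k\neq0$) gives the upper bound. The one difference is that you use $\phi_{L-\varepsilon}$ and let $\varepsilon\to0$, whereas the paper plugs in $\phi_L$ directly even though its support $[-L,L]$ is not contained in the open interval $(-L,L)$, so your version is the more careful one.
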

	
	\begin{proof}
		Define the function $\phi_L(x)=\frac{1}{L} 1_{(-L/2,L/2)} \ast 1_{(-L/2,L/2)}$, then it is positive definite, supported in $[-L,L]$ and satisfies $\phi_L(0)=1$. It follows that $\phi_L(x)$ is admissible for the Tur\'an extremal problem on $(-L,L)$ and 
		$$
		\mathcal{T}_{\mathbb{R}}((-L,L)) \geq L = \int_\mathbb{R}  \phi_L(x) dx.
		$$
		
		On the other hand, let $f$ be an admissible function for Tur\'an extremal problem on $(-L,L)$. Applying Lemma~\ref{lem:periodization} with $h=L$, we have 
		$$\int f\,dx\le L P_L(0)=L.$$
		Indeed, since $f$ is continuous and $\operatorname{supp} f\subset[-L,L]$, we have $f(\pm L)=0$. Hence $P_L(0)=\sum_{k\in\mathbb{Z}}f(kL)=f(0)=1$. Hence,
		$$
		\mathcal{T}_{\mathbb{R}}((-L,L)) \leq L .
		$$
	\end{proof}
	
	\subsection{The transformation of the two problems}
	
	Here, we demonstrate the relationship between the Tur\'an extremal problem on the union of a finite number of intervals and the discrete extremal problem on finite discrete set. Let $\Omega \subset \mathbb{R}$ be a  symmetric finite union of open intervals, and assume that the origin is the interior point of the set $\Omega$. For $h>0$, we denote
	\begin{equation}\label{eq:Aminus}
		A^-_{\Omega,h} = \{n\in\mathbb{Z} : nh+(-h,h) \subset \Omega\},
	\end{equation}
	\begin{equation}\label{eq:Aplus}
		A^+_{\Omega,h} = \{n\in\mathbb{Z} : nh \in \operatorname{int}\Omega\}.
	\end{equation}
	When $\Omega = \Omega_\lambda$ we write $A^-_{\lambda,h}$ and $A^+_{\lambda,h}$ for these two sets. Notice that $A^-_{\Omega,h}$ and $ A^+_{\Omega,h}$ are finite sets if $\Omega$ is a bounded set.
	
	\begin{theorem}\label{thm:sandwich}
		Let $\Omega\subset\mathbb{R}$ be a symmetric finite union of bounded open intervals with $0 \in \operatorname{int}\Omega$.  If $h > 0$ is small enough that $(-h,h) \subset \Omega $, then
		\begin{equation}\label{eq:sandwich}
			h \mathcal{T}_\mathbb{Z}(A^-_{\Omega,h}) \le \mathcal{T}_\mathbb{R}(\Omega) \le h\mathcal{T}_\mathbb{Z}(A^+_{\Omega,h}).
		\end{equation}
	\end{theorem}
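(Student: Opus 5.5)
The two inequalities in \eqref{eq:sandwich} have separate proofs. The lower bound comes from building a continuous admissible function out of a discrete one with the triangular kernel $\phi_h$. The upper bound comes from sampling a continuous admissible function on the lattice $h\mathbb{Z}$ and using the periodization inequality of Lemma~\ref{lem:periodization}.

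\emph{Lower bound.} Let $(c_n)$ be a discrete Tur\'an admissible sequence for $A^-_{\Omega,h}$, and set
\[
F(x)=\sum_{n\in A^-_{\Omega,h}} c_n\,\phi_h(x-nh).
\]
\begin{enumerate}
\item $F$ is continuous, because the sum is finite.
\item Its support lies in $\bigcup_{n} (nh+[-h,h])$. By the definition \eqref{eq:Aminus}, each $nh+(-h,h)$ is contained in $\Omega$. Strictly, the closed intervals $nh+[-h,h]$ might touch $\partial\Omega$. If the support convention needs containment in the open set, replace $\phi_h$ by $\phi_{h'}$ with $h'<h$ and accept the loss $h'\to h$ at the end. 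With the paper's convention (supp $f\subset\Omega$ tolerating $f$ vanishing at the boundary, as in Proposition~\ref{lem:interval}), this is fine directly.
\item $F(0)=c_0=1$, since $\phi_h(nh)=\delta_{n0}$.
\item $\int F=h\sum_n c_n$.
\item $F$ is positive definite. Its Fourier transform is $\widehat F(\xi)=\widehat{\phi_h}(\xi)\sum_n c_n e^{-2\pi i n h\xi}$. The first factor is $\ge0$ by \eqref{eq:triangle-properties}, and the second is $P_c(-2\pi h\xi)\ge0$ by Lemma~\ref{lem:finite-herglotz}. Bochner's theorem then gives positive definiteness.
\end{enumerate}
Taking the supremum over $(c_n)$ yields $h\mathcal{T}_\mathbb{Z}(A^-_{\Omega,h})\le\mathcal{T}_\mathbb{R}(\Omega)$.

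\emph{Upper bound.} Let $f$ be Tur\'an admissible for $\Omega$, taken real and even. Define $c_n=f(nh)$.
\begin{enumerate}
\item By Lemma~\ref{lem:conti-discete}, $(c_n)$ is positive definite.
\item $c_0=f(0)=1$.
\item If $nh\notin\operatorname{int}\Omega=\Omega$, then $c_n=0$. Indeed, $f$ is continuous with support in $\Omega$, and $\Omega$ is a finite union of open intervals, so $f$ vanishes on the complement and on the boundary. Hence $(c_n)$ is supported in $A^+_{\Omega,h}$ and is discrete admissible for that set.
\item Lemma~\ref{lem:periodization} gives $\int f\le h\sum_k f(kh)=h\sum_{n\in A^+_{\Omega,h}}c_n\le h\mathcal{T}_\mathbb{Z}(A^+_{\Omega,h})$.
\item Taking the supremum over $f$ finishes the upper bound.
\end{enumerate}

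The main delicate point is the boundary/support convention in the lower bound, namely whether $\operatorname{supp}F$ must lie strictly inside $\Omega$. I would handle it as follows. Use the kernel $\phi_{h'}$ with $h'<h$, which keeps the closed supports $nh+[-h',h']$ inside $\Omega$. Positive definiteness and $F(0)=1$ still hold, since $\phi_{h'}(nh)=\delta_{n0}$ for $h'\le h$. This gives $\mathcal{T}_\mathbb{R}(\Omega)\ge h'\mathcal{T}_\mathbb{Z}(A^-_{\Omega,h})$, and letting $h'\uparrow h$ recovers the stated bound. The hypothesis $(-h,h)\subset\Omega$ guarantees $0\in A^-_{\Omega,h}$, so the discrete problem on $A^-_{\Omega,h}$ is nonempty.
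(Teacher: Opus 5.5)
Your proposal is correct and follows the paper's proof essentially step for step. For the lower bound, the paper lifts a discrete admissible sequence using the shrunken kernel $\phi_{(1-\epsilon)h}$ (your $\phi_{h'}$ with $h'\uparrow h$), so that $\widehat{F}$ is $\widehat{\phi}\ge 0$ times a nonnegative trigonometric polynomial. For the upper bound, it samples a real even admissible $f$ on $h\mathbb{Z}$, uses Lemma~\ref{lem:conti-discete} for positive definiteness, and uses Lemma~\ref{lem:periodization} for $\int f\le h\sum_n f(nh)$.
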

	
	\begin{proof}
		We prove the two inequalities separately.
		
		\smallskip
		\noindent\textbf{Step 1: lower bound.} Let $c=(c_n)_{n \in \mathbb{Z}}$ be positive definite sequence with $c_0=1$ and $c_n=0$ for $n \notin A^-_{\Omega,h}$. For small enough $\epsilon > 0$, we define the lifted function 
		\begin{equation}\label{eq:lifting-function}
			f(x)=\sum_{n\in A^-_{\Omega,h}}c_n\phi_{(1-\epsilon)h} (x-nh).
		\end{equation}
		By the definition of $A^-_{\Omega,h}$, the function $f$ is integrable, $f(0) = c_0 \phi_{(1-\epsilon)h}(0) = 1$ and supported in $\Omega$. The Fourier transform of $f$ is
		$$
		\widehat{f}(\xi)=\widehat{\phi_{(1-\epsilon)h}}(\xi)\sum_{n\in A^-_{\Omega,h}}c_n e^{-2\pi i nh\xi}.
		$$
		Lemma \ref{lem:finite-herglotz} tells us $\sum_{n\in A^-_{\Omega,h}}c_n e^{-2\pi i nh\xi} \geq 0$. Since $\phi_{(1-\epsilon)h} (\xi)$ is a positive definite autocorrelation, we have $\widehat{f} (\xi) \geq 0$. Then the function $f(x)$ is Tur\'an admissible function and 
		$$
		\int_\mathbb{R} f(x) dx=\sum_{n\in A^-_{\Omega,h}} c_n\int_\mathbb{R} \phi_{(1-\epsilon)h}(x-nh) dx = (1-\epsilon)h\sum_{n\in A^-_{\Omega,h}}c_n. 
		$$
		Thus,
		$$
		(1-\epsilon)h\sum_{n\in A^-_{\Omega,h}}c_n \leq \mathcal{T}_\mathbb{R}(\Omega)
		$$
		for all admissible sequence $(c_n)_{n \in \mathbb{Z}}$. Taking $\epsilon \rightarrow 0$, we obtain $h \mathcal{T}_\mathbb{Z}(A^-_{\Omega,h}) \leq \mathcal{T}_\mathbb{R}(\Omega)$.
		
		\smallskip
		\noindent\textbf{Step 2: upper bound.} Let $f$ be admissible for $\Omega$. Without lost of generality, we may assume $f$ is real and even. 
		Lemma~\ref{lem:periodization} gives
		$$
		\int_\mathbb{R} f(x) dx\leq h\sum_{n\in\mathbb{Z}}f(nh).
		$$
		Set $c_n=f(nh)$. Then the sequence $c=(c_n)_{n \in \mathbb{Z}}$ satisfies that $c_0 = 1$ and $c_n = 0$ for $n \notin A^+_{\Omega,h}$. By Lemma \ref{lem:conti-discete}, we know that the sequence $c$ is positive definite. Then $c$ is an admissible sequence and we have 
		$$
		\int_\mathbb{R} f(x) dx\leq h\mathcal{T}_\mathbb{Z}(A^+_{\Omega,h})
		$$
		for all admissible function, that is $\mathcal{T}_\mathbb{R}(\Omega) \leq h\mathcal{T}_\mathbb{Z}(A^+_{\Omega,h})$.
	\end{proof}
	
	We notice that the inequality (\ref{eq:sandwich}) becomes an identity if $A^-_{\Omega,h}=A^+_{\Omega,h}$. However, in general, the set $A^-_{\Omega,h}$ is contained in, but not equal to, the set $A^+_{\Omega,h}$.
	
	\begin{corollary}[Rational endpoints]\label{cor:rational-endpoints}
		Let $\Omega\subset\mathbb{R}$ be a symmetric finite union of bounded open intervals with $0 \in \operatorname{int}\Omega$. Suppose that every boundary point of $\Omega$ belongs to $q^{-1}\mathbb{Z}$ for some integer $q\ge1$. Then
		\begin{equation}\label{eq:rational-endpoints}
			\mathcal{T}_{\mathbb{R}}(\Omega)=\frac1q\,\mathcal{T}_\mathbb{Z}\left(\left\{n\in\mathbb{Z}:\frac nq\in\operatorname{int}\Omega\right\}\right).
		\end{equation}
	\end{corollary}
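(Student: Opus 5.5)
We apply Theorem~\ref{thm:sandwich} with the specific step $h=1/q$ and show that for this step the two sets $A^-_{\Omega,h}$ and $A^+_{\Omega,h}$ coincide. The sandwich \eqref{eq:sandwich} then collapses to the identity \eqref{eq:rational-endpoints}. The set $A^+_{\Omega,1/q}$ is by definition exactly $\{n\in\mathbb{Z}: n/q\in\operatorname{int}\Omega\}$, so only the equality of the two sets and the hypothesis $(-h,h)\subset\Omega$ need checking.

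\textbf{Inclusion $A^-\subset A^+$.} If $n/q+(-1/q,1/q)\subset\Omega$, then $n/q$ lies in an open subset of $\Omega$, hence in $\operatorname{int}\Omega$.

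\textbf{Inclusion $A^+\subset A^-$.} Take $n$ with $n/q\in\operatorname{int}\Omega$.
\begin{itemize}
\item Let $(a,b)$ be the connected component of $\operatorname{int}\Omega$ containing $n/q$. Its endpoints $a,b$ are boundary points of $\Omega$, so by hypothesis $a,b\in q^{-1}\mathbb{Z}$.
\item From $a<n/q<b$ with all three in $q^{-1}\mathbb{Z}$ we get $a\le (n-1)/q$ and $b\ge (n+1)/q$.
\item Hence $(n/q-1/q,\,n/q+1/q)\subset(a,b)\subset\Omega$, so $n\in A^-_{\Omega,1/q}$.
\end{itemize}

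\textbf{Hypothesis $(-h,h)\subset\Omega$.} This is the case $n=0$ of the previous step: since $0\in\operatorname{int}\Omega$, the component of $\operatorname{int}\Omega$ containing $0$ has endpoints in $q^{-1}\mathbb{Z}$ on either side of $0$. So it contains $(-1/q,1/q)$, and Theorem~\ref{thm:sandwich} applies with $h=1/q$.

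The only point needing care is the identification of component endpoints with boundary points of $\Omega$. Since $\Omega$ is a finite union of open intervals, each component of $\operatorname{int}\Omega$ is a bounded open interval whose endpoints are not in $\operatorname{int}\Omega$. They are limits of points of $\Omega$, so they lie in $\partial\Omega$.

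There is one degenerate situation: a point removed from the middle of an interval, such as $b$ in the Kolountzakis--R\'ev\'esz example. Such a point is still a boundary point of $\Omega$, and the hypothesis puts it in $q^{-1}\mathbb{Z}$, so the argument is unaffected.

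With $A^-_{\Omega,1/q}=A^+_{\Omega,1/q}$, \eqref{eq:sandwich} gives $\mathcal{T}_\mathbb{R}(\Omega)=\tfrac1q\mathcal{T}_\mathbb{Z}(A^+_{\Omega,1/q})$, which is \eqref{eq:rational-endpoints}.
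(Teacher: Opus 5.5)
Your proof is correct and follows the paper's route: apply Theorem~\ref{thm:sandwich} with $h=1/q$, then show $A^+_{\Omega,1/q}\subset A^-_{\Omega,1/q}$ using the fact that the component of $\Omega$ containing $n/q$ has endpoints in $q^{-1}\mathbb{Z}$. You also make explicit two points the paper leaves implicit: that the hypothesis $(-1/q,1/q)\subset\Omega$ holds, and that component endpoints are boundary points of $\Omega$.
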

	
	\begin{proof}
		Let
		$$
		A_q^+ (\Omega) = \left\{ n \in \mathbb{Z} : \frac{n}{q} \in \operatorname{int}\Omega \right\},
		\qquad
		A_q^- (\Omega) = \left\{ n \in \mathbb{Z} : \frac {n}{q} + \left( -\frac{1}{q}, \frac{1}{q} \right) \subset \Omega \right\}.
		$$
		Write a connected component of $\Omega$ as $(a/q,b/q)$ with $a,b\in\mathbb{Z}$. If $n/q$ is an interior point of this component, then $a<n<b$, and hence
		$$
		\frac{n}{q}+\left(-\frac{1}{q},\frac{1}{q}\right)\subset \left(\frac{a}{q},\frac{b}{q} \right).
		$$
		Thus $n/q \in A_q^- (\Omega)$ which means $A_q^+(\Omega) \subset A_q^-(\Omega)$. Hence $A_q^+(\Omega)=A_q^-(\Omega)$. Applying Theorem~\ref{thm:sandwich} with $h=1/q$, we obtain the identity (\ref{eq:rational-endpoints}).
	\end{proof}
	
	\subsection{The discrete approximation principle}
	
	In contrast to the above discussion, we now show that the Tur\'an constant on finite unions of intervals could be approximated via a discrete extremal problem on a finite discrete set.
	Let $\Omega \subset \mathbb{R}$ be a symmetric finite union of open intervals with $0 \in \operatorname{int}\Omega$. For all sufficiently large $N$ set
	\begin{equation}\label{eq:AN-Omega}
		A_N (\Omega) = \left\{ n \in \mathbb{Z}: \frac{n}{N} + \left(-\frac{1}{N}, \frac{1}{N}\right) \subset \Omega \right\}.
	\end{equation}
	Indeed $A_N(\Omega)$ is the set $A^-_{\Omega,1/N}$. We denote $A^+_{\Omega,1/N}$ as $A_N^+(\Omega)$.
	
	\begin{theorem}\label{thm:finite-union-limit}
		Let $\Omega \subset \mathbb{R}$ be a symmetric finite union of bounded open intervals with $0 \in \operatorname{int}\Omega$. Then
		\begin{equation}\label{eq:finite-union-limit}
			\mathcal{T}_\mathbb{R} (\Omega) = \lim_{N\to\infty} \frac{1}{N} \mathcal{T}_\mathbb{Z}(A_N(\Omega)).
		\end{equation}
	\end{theorem}
	
	\begin{proof}
		Taking $h = \frac{1}{N}$ in Theorem \ref{thm:sandwich} we have $\mathcal{T}_\mathbb{R} (\Omega) \geq \frac{1}{N} \mathcal{T}_\mathbb{Z}(A_N(\Omega))$ and then
		\begin{equation}\label{eq:finite-union-limsup}
			\limsup_{N\to+\infty}\frac{1}{N} \mathcal{T}_\mathbb{Z}(A_N(\Omega))\leq \mathcal{T}_\mathbb{R}(\Omega).
		\end{equation}

		Without loss of generality, assume that the Tur\'an admissible function $f$ is real-valued even function and $\Omega$ consists of $M$ disjoint intervals. Define 
		$$
		B_N(\Omega)=A_N^+(\Omega)\setminus A_N(\Omega).
		$$
		If there is an irrational boundary point of $\Omega$, then $B_N(\Omega)$ is not empty and each $n\in B_N(\Omega)$ means that the point $n/N$ lies inside $\Omega$ but its radius-$1/N$ neighborhood is not contained in $\Omega$. Hence $n/N$ is within distance $1/N$ of some boundary point of $\Omega$. For each boundary point $b$, there are at most two integers $n$ lying in the open interval $(Nb-1,Nb+1)$. Therefore
		\begin{equation}\label{eq:general-boundary-count}
			|B_N(\Omega)|\le 4M.
		\end{equation}
		
		Since $f$ vanishes on the boundary of $\Omega$ and $f$ is uniformly continuous, the quantity
		\begin{equation}\label{eq:general-eta}
			\eta_N=\sum_{n\in B_N(\Omega)} |f(n/N)| \to 0 ~\text{as}~ N \to +\infty.
		\end{equation}
		Define a sequence $\{c_n^{(N)}\}_{n \in \mathbb{Z}}$ as 
		$$
		c_0^N = 1+ \eta_N,\quad c_n^{(N)}=f(n/N)\ (n\in A_N(\Omega),\ n\ne0),\quad
		c_n^{(N)}=0\ (n\notin A_N(\Omega)).
		$$
		that is a positive definite sequence. More precisely, we denote 
		$$
		p_N(t)=\sum_{n\in A_N^+(\Omega)} f(n/N)e^{int},\qquad
		q_N(t)=\sum_{n\in A_N(\Omega)} f(n/N)e^{int}.
		$$
		Since $f$ is real and even and the index sets are symmetric, both trigonometric polynomials are real-valued. Lemma~\ref{lem:finite-herglotz} gives $p_N(t)\ge0$. Moreover, 
		$$
		\left|\sum_{n\in B_N(\Omega)}f(n/N)e^{int}\right|
		\le\sum_{n\in B_N(\Omega)}|f(n/N)|=\eta_N.
		$$
		Hence,
		$$
		q_N(t)=p_N(t)-\sum_{n\in B_N(\Omega)}f(n/N)e^{int}\ge -\eta_N,
		$$
		that is $q_N(t) + \eta_N \geq 0$, which implies the positive-definiteness of the sequence $\{c_n^{(N)}\}_{n \in \mathbb{Z}}$. Therefore, the sequence $\{\frac{1}{1+ \eta_N}c_n^{(N)}\}_{n \in \mathbb{Z}}$ is admissible for $\mathcal{T}_{\mathbb{Z}}(A_N(\Omega))$ and 
		\begin{equation}\label{eq:general-TZ-from-f}
			\frac{1}{N}\mathcal{T}_{\mathbb{Z}}(A_N(\Omega))
			\ge
			\frac{\displaystyle \frac1N\sum_{n\in A_N(\Omega)} f(n/N)+\frac{\eta_N}{N}}{1+\eta_N}.
		\end{equation}
		
		The Riemann sums of function $f \in C_c (\mathbb{R})$ satisfy
		$$
		\frac{1}{N}\sum_{n\in\mathbb{Z}}f(n/N)\longrightarrow \int_\mathbb{R} f(x)dx,~\text{as}~ N \longrightarrow +\infty .
		$$
		Since $A_N^+(\Omega)=A_N(\Omega)\sqcup B_N(\Omega)$ and $f(n/N)=0$ for $n\notin A_N^+(\Omega)$,
		$$
		\frac1N\sum_{n\in\mathbb{Z}}f(n/N)-
		\frac1N\sum_{n\in A_N(\Omega)}f(n/N)
		=\frac1N\sum_{n\in B_N(\Omega)}f(n/N).
		$$
		Consequently
		$$
		\left|\frac1N\sum_{n\in\mathbb{Z}}f(n/N)-
		\frac1N\sum_{n\in A_N(\Omega)}f(n/N)\right|
		\le \frac{\eta_N}{N}.
		$$
		Using $ N\to +\infty$ in \eqref{eq:general-TZ-from-f} gives
		$$
		\liminf_{N\to\infty}\frac1N\mathcal{T}_\mathbb{Z}(A_N(\Omega))\ge \int_\mathbb{R} f(x)\,dx.
		$$
		Taking the supremum over all admissible $f$ proves
		$$
		\liminf_{N\to\infty}\frac1N\mathcal{T}_\mathbb{Z}(A_N(\Omega))\ge \mathcal{T}_\mathbb{R}(\Omega).
		$$
		Together with \eqref{eq:finite-union-limsup}, this proves the theorem.
		
	\end{proof}
	
	\section{Applications on three intervals}
	Now we apply the results of the Tur\'an extremal problem on finite intervals in Section 2 to a special case, namely the Tur\'an extremal problem on three intervals. We first apply Corollary~\ref{cor:rational-endpoints} to the case $\lambda=p/q$ with $\gcd(p,q)=1$, which proves Theorem \ref{thm:rational}.
	
	\begin{proof}[\textbf{Proof of Theorem~\ref{thm:rational}}]
		Let $\lambda=p/q>2$ with $\gcd(p,q)=1$. Then
		$$
		\Omega_{p/q}= \left(-1,1\right)\cup \left(\frac{p}{q}-1,\frac{p}{q}+1 \right)\cup \left(-\frac{p}{q}-1,-\frac{p}{q}+1 \right).
		$$
		All boundary points of $\Omega_{p/q}$ lie in the lattice set $q^{-1}\mathbb{Z}$, so Corollary~\ref{cor:rational-endpoints} gives
		$$
		\mathcal{T}(\Omega_{p/q})=\frac1q\,\mathcal{T}_\mathbb{Z}\left(\left\{n\in\mathbb{Z}:\frac nq\in\operatorname{int}\Omega_{p/q}\right\}\right).
		$$
		
		Obviously we have the following three equivalence relations, namely,
		$$
		\frac {n}{q}\in(-1,1)\Longleftrightarrow |n|<q,
		$$
		$$
		\frac {n}{q}\in \left(\frac{p}{q}-1,\frac{p}{q}+1 \right) \Longleftrightarrow |n-p|<q,
		$$
		and
		$$
		\frac {n}{q}\in \left(-\frac{p}{q}-1,-\frac{p}{q}+1 \right) \Longleftrightarrow |n+p|<q.
		$$
		Thus
		$$
		\left\{n\in\mathbb{Z}:\frac {n}{q} \in \operatorname{int}\Omega_{p/q}\right\} = A_{p,q},
		$$
		and with applying Corollary~\ref{cor:rational-endpoints} we have Theorem~\ref{thm:rational}.
	\end{proof}

	Now we specialize Theorem~\ref{thm:finite-union-limit} to $\Omega_\lambda$ with arbitrary $\lambda$.
	
	\begin{proof}[\textbf{Proof of Theorem~\ref{thm:limit}}]
		Let $\lambda>2$. For the interval $(-1,1)$, the condition
		$$
		\frac nN+\left(-\frac1N,\frac1N\right)\subset(-1,1)
		$$
		is equivalent to $|n|\le N-1$.  For the interval $(\lambda-1,\lambda+1)$, the condition is
		$$
		\frac nN-\frac1N\ge \lambda-1,
		\qquad
		\frac nN+\frac1N\le \lambda+1,
		$$
		which is equivalent to
		$$
		N\lambda-N+1\le n\le N\lambda+N-1,
		$$
		or $|n-N\lambda|\le N-1$. Similarly for the interval $(-\lambda-1,-\lambda+1)$, we have $|n+N\lambda|\le N-1$.  Thus
		$$
		A_N(\Omega_\lambda)=
		\{n:|n|\le N-1\}\cup\{n:|n-N\lambda|\le N-1\}\cup\{n:|n+N\lambda|\le N-1\}=A_N(\lambda).
		$$
		Substituting this identity into \eqref{eq:finite-union-limit} gives \eqref{eq:limit-formula}.
	\end{proof}
	
	\section{Finite program and examples}\label{algorithm}
	In this section, we record the finite-program interpretation and give some examples of estimating Tur\'an constant.
	
	\subsection{The finite SDP formulation}\label{sec:sdp}
	
	For a finite symmetric set $A\subset\mathbb{Z}$ with $0\in A$, Lemma~\ref{lem:finite-herglotz} rewrites $\mathcal{T}_\mathbb{Z}(A)$ as
	$$
	\mathcal{T}_\mathbb{Z}(A)=\sup\left\{\sum_{n\in A}c_n:P_c(t)=\sum_{n\in A}c_n e^{int}\ge 0\ \forall t\in\mathbb{R},\ c_0=1\right\}.
	$$
	The objective is $P_c(0)$ and is real. Since $A$ is symmetric, one may restrict to real even coefficients by averaging $P_c(t)$ with $P_c(-t)$. If $A\subset[-m,m]$, the Fej\'er--Riesz theorem~\cite{Fejer1916,Riesz1916} gives
	$$
	P_c(t)=v(t)^\ast Qv(t),\qquad v(t)=(1,e^{it},\ldots,e^{imt})^\top,
	$$
	for some $Q\succeq0$, subject to linear constraints on the Fourier coefficients.
	
	Indeed, if $Q=(Q_{jk})_{0\le j,k\le m}$, then
	$$
	v(t)^\ast Qv(t)
	=\sum_{j,k=0}^m Q_{jk}e^{i(k-j)t}
	=\sum_{\ell=-m}^m\left(\sum_{\substack{0\le j,k\le m\\ k-j=\ell}}Q_{jk}\right)e^{i\ell t}.
	$$
	Thus the SDP constraints are
	$$
	\sum_{\substack{0\le j,k\le m\\ k-j=\ell}}Q_{jk}=c_\ell\quad(\ell\in A),\qquad
	\sum_{\substack{0\le j,k\le m\\ k-j=\ell}}Q_{jk}=0\quad(\ell\notin A),
	$$
	together with $c_0=1$, $Q=Q^\ast$, and $Q\succeq0$; then $c_{-\ell}=\overline{c_\ell}$. In the real-even formulation the variables may be taken real symmetric.
	
	In particular, for irrational $\lambda$, Theorem~\ref{thm:limit} gives
	$$
	\mathcal{T}_\mathbb{R}(\Omega_\lambda)=\lim_{N\to\infty}\frac{1}{N}\mathcal{T}_\mathbb{Z}(A_N(\lambda)),
	$$
	where each $\mathcal{T}_\mathbb{Z}(A_N(\lambda))$ is the optimal value of an explicit finite SDP\@.
	
	\subsection{An integer number example}
	An integer $\lambda=N\ge3$ corresponds to $p=N$, $q=1$ in Theorem~\ref{thm:rational} and
	$$
	A_{N,1}=\{0,\pm N\}.
	$$
	
	\begin{proposition}\label{cor:integer}
		If $N \ge 3$ is an integer, then $\mathcal{T}_{\mathbb{R}}(\Omega_N)=2$.
	\end{proposition}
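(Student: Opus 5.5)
By Theorem~\ref{thm:rational} with $p=N$, $q=1$, we have $\mathcal{T}_{\mathbb{R}}(\Omega_N)=\mathcal{T}_\mathbb{Z}(A_{N,1})$. Here $A_{N,1}=\{0,\pm N\}$: the conditions $|n|<1$ and $|n\mp N|<1$ force $n\in\{0,N,-N\}$, and these three values are distinct because $N\ge3$. So the whole proof reduces to computing the discrete Tur\'an constant of the three-point set $\{0,\pm N\}$. As noted in Section~\ref{sec:sdp}, we may restrict to real even coefficients. An admissible sequence is then determined by one real number $a=c_N=c_{-N}$, with $c_0=1$ and all other $c_n=0$.

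By Lemma~\ref{lem:finite-herglotz}, positive definiteness is equivalent to
$$P_c(t)=1+2a\cos(Nt)\ge0\qquad\text{for all }t\in\mathbb{R}.$$
Since $\cos(Nt)$ takes every value in $[-1,1]$, this holds if and only if $1-2|a|\ge0$, i.e.\ $|a|\le 1/2$.

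The objective is $\sum_{n\in A}c_n=1+2a$. Its maximum over $|a|\le1/2$ is $2$, attained at $a=1/2$. For this choice $P_c(t)=1+\cos(Nt)=2\cos^2(Nt/2)\ge0$, so the sequence is indeed admissible. Hence $\mathcal{T}_\mathbb{Z}(A_{N,1})=2$, and therefore $\mathcal{T}_{\mathbb{R}}(\Omega_N)=2$.

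As a cross-check, the corresponding continuous extremizer comes from the lifting in Step~1 of Theorem~\ref{thm:sandwich} with $h=1$:
$$f=\phi_1+\tfrac12\phi_1(\cdot-N)+\tfrac12\phi_1(\cdot+N).$$
It has integral $2$ and is supported in $\Omega_N$. No step here is a genuine obstacle. The only care needed is checking that $N\ge3$, so that $\lambda=N>2$ and Theorem~\ref{thm:rational} applies; this also keeps the three lattice points distinct.
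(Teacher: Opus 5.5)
Your proof is correct and is essentially the paper's argument: you reduce via Theorem~\ref{thm:rational} to $\mathcal{T}_\mathbb{Z}(\{0,\pm N\})$, use nonnegativity of $1+2c_N\cos(Nt)$ to force $|c_N|\le 1/2$, and attain $2$ at $c_{\pm N}=1/2$; the paper differs only in keeping $c_N$ complex and bounding $1+2\operatorname{Re}c_N$, where you first symmetrize to real even coefficients. One small slip, in your cross-check only: $\phi_1$ has support $[-1,1]\not\subset(-1,1)$, so your $f$ is not strictly admissible, which is why Step~1 of Theorem~\ref{thm:sandwich} uses $\phi_{1-\varepsilon}$, giving integrals $2(1-\varepsilon)$ that only approach $2$.
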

	
	\begin{proof}
		By Theorem~\ref{thm:rational}, We just need to calculate  $\mathcal{T}_\mathbb{Z}(\{0,\pm N\})$. Any positive definite sequence $(c_n)_{n \in \mathbb{Z}}$ satisfying $c_0=1$ and having nonzero coefficients only at $\pm N$ yields the trigonometric polynomial
		\begin{equation}\label{eq:pd-cn}
			1+c_N e^{iNt}+c_{-N}e^{-iNt}=1+2\text{Re}(c_N e^{iNt}).
		\end{equation}
		By Lemma~\ref{lem:finite-herglotz}, positive definiteness of the sequence is equivalent to nonnegativity of the polynomial \eqref{eq:pd-cn}, that is
		$$
		1+2\text{Re}(c_N e^{iNt})\ge0\quad(t\in\mathbb{R}).
		$$
		This forces $|c_N|\le 1/2$. Hence
		$$
		\sum_n c_n=1+2\text{Re} c_N\ \le\ 2,
		$$
		The quality holds for $c_{\pm N}=1/2$, so $\mathcal{T}_\mathbb{Z}(\{0,\pm N\})=2$ and $\mathcal{T}(\Omega_N)=2$.
	\end{proof}
	
	\subsection{A rational number example}\label{sec:fivehalves}
	
	\begin{proposition}\label{prop:fivehalves-gain}
		One has
		\begin{equation}\label{eq:fivehalves}
			\mathcal{T}_{\mathbb{R}}(\Omega_{5/2})\ge \frac{559+80\sqrt{42}}{506}>2.
		\end{equation}
	\end{proposition}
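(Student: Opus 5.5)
By Theorem~\ref{thm:rational} with $p=5$, $q=2$, we have $\mathcal{T}_{\mathbb{R}}(\Omega_{5/2})=\tfrac12\mathcal{T}_{\mathbb{Z}}(A_{5,2})$, where
$$
A_{5,2}=\{0,\pm1\}\cup\{\pm4,\pm5,\pm6\}.
$$
So it suffices to exhibit one admissible sequence with
$$
\sum_n c_n=\frac{559+80\sqrt{42}}{253}\approx 4.259>4 .
$$
As explained in Section~\ref{sec:sdp}, I restrict to real even coefficients. The task is then to find reals $c_1,c_4,c_5,c_6$ such that
$$
P(t)=1+2c_1\cos t+2c_4\cos 4t+2c_5\cos 5t+2c_6\cos 6t\ \ge 0\quad(t\in\mathbb{R}),
$$
with $1+2(c_1+c_4+c_5+c_6)$ equal to the claimed value. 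By Lemma~\ref{lem:finite-herglotz}, such $c$ is admissible.

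First I would note which constructions only reach $2$. A single square $|g(e^{it})|^2$ with $g(z)=\alpha+\beta z+\gamma z^5+\delta z^6$ automatically has frequencies only in $\{0,\pm1,\pm4,\pm5,\pm6\}$. However, its ratio $P(0)/c_0=(\alpha+\beta+\gamma+\delta)^2/(\alpha^2+\beta^2+\gamma^2+\delta^2)$ is at most $4$ by Cauchy--Schwarz, which gives only $\mathcal{T}_{\mathbb{R}}\ge 2$. To beat $4$, the certificate has to use the positive semidefinite Gram matrix $Q$ of size $7$ from Section~\ref{sec:sdp} (indices $0,\dots,6$) in a nontrivial way. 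Concretely, the diagonal sums of $Q$ at lags $2$ and $3$ must vanish through cancellation between several rank-one pieces, rather than being zero term by term.

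The plan is therefore as follows.

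\emph{Step 1.} Solve the small SDP (maximize $\sum_\ell c_\ell$ subject to $Q\succeq0$, $\operatorname{tr}Q=1$, zero lag-$2$ and lag-$3$ sums) numerically. I expect the optimal $Q$ to have rank $2$ and a symmetric structure under the reversal $j\mapsto 6-j$.

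\emph{Step 2.} From the numerics, guess a parametrized ansatz. A natural candidate is
$$
P=|g_1|^2+|g_2|^2,
$$
with $g_1$ palindromic and $g_2$ antipalindromic of degree $6$ and few parameters. Then impose exactly the two linear cancellation conditions at lags $2$ and $3$.

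\emph{Step 3.} Optimize the Rayleigh-type quotient $P(0)/c_0$ over the remaining parameter or parameters. This reduces to a quadratic equation whose discriminant produces the $\sqrt{42}$. The maximizer gives explicit algebraic $c_1,c_4,c_5,c_6$, after normalizing so that $c_0=1$.

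\emph{Step 4.} Verify exactly. Check that the Fourier coefficients of $|g_1|^2+|g_2|^2$ vanish at lags $2$ and $3$. Then $P\ge0$ holds as a sum of squares, so $c$ is admissible. Compute $\sum c_n=(559+80\sqrt{42})/253$. Finally, note $80\sqrt{42}>447$, since $80^2\cdot 42=268800>447^2=199809$. Hence the bound exceeds $4$, and dividing by $2$ gives the strict inequality $>2$.

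The main obstacle is Step 2: turning the numerical optimum into an exact sum-of-squares certificate with the right algebraic numbers. If the rank-two symmetric ansatz does not match, I would fall back on an exact rational or quadratic-irrational rounding of the numerical Gram matrix $Q$. I would then verify $Q\succeq0$ directly through its leading principal minors and recompute the resulting $\sum c_n$.
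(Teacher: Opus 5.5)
\textbf{Gap: no certificate is exhibited.} Your reduction to $\mathcal{T}_{\mathbb{Z}}(A)$ with $A=\{0,\pm1,\pm4,\pm5,\pm6\}$ matches the paper, but you never exhibit an admissible sequence. Steps 1--3 describe a search procedure (numerical SDP, guessed ansatz, optimization of a quotient), and Step 4 simply asserts its outcome. The proposition amounts to the existence of an explicit certificate, so this is the whole proof that is missing.

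The paper supplies it directly. With $x=\cos t$, take $P(x)=\frac{3456}{253}q(x)^2$ where $q(x)=x^3+\frac{\sqrt{42}}{12}x^2-\frac{2}{3}x-\frac{7\sqrt{42}}{144}$. Expanding in Chebyshev polynomials, the $T_2$ and $T_3$ coefficients cancel and the constant term is $1$. This gives $c_{\pm1}=\frac{2\sqrt{42}}{23}$, $c_{\pm4}=\frac{9}{23}$, $c_{\pm5}=\frac{18\sqrt{42}}{253}$, $c_{\pm6}=\frac{54}{253}$, and $P(1)=\frac{559+80\sqrt{42}}{253}$.

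This is a \emph{single} square: $P(\cos t)=|g(e^{it})|^2$ with $g(z)=\sqrt{3456/253}\,z^3q\bigl(\frac{z+z^{-1}}{2}\bigr)$, a palindromic polynomial of degree $6$. Equivalently, it is a rank-one Gram matrix whose lag-$2$ and lag-$3$ sums cancel internally. So your claim that exceeding $4$ forces cancellation between several rank-one pieces is false. Your Cauchy--Schwarz argument only rules out squares whose lag-$2$ and lag-$3$ products vanish term by term, such as those supported on $\{0,1,5,6\}$.

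\textbf{The predicted outcome is also unsupported.} The paper explicitly claims no optimality, and its value is not even optimal inside the one-parameter family it comes from. Write $q=x^3+bx^2+cx+d$. The two cancellation conditions are $d=-b(5+4c)/4$ and $b^2=(\frac{15}{8}+4c+2c^2)/(3+4c)$. The $\sqrt{42}$ comes only from the convenient choice $c=-\frac{2}{3}$, which gives $b^2=\frac{7}{24}$. By contrast, $c=-0.68$ (with $b>0$) already yields a normalized $P(1)\approx 4.270>4.2587$.

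Hence solving the SDP to optimality as in Steps 1--3 would not return $(559+80\sqrt{42})/253$. A larger value would still imply the stated inequality, but only after an exact certificate is written down and verified, which is exactly what the proposal lacks.

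\textbf{Arithmetic slip.} Your final check is wrong: $\frac{559+80\sqrt{42}}{253}>4$ is equivalent to $80\sqrt{42}>1012-559=453$, not $447$. It holds because $80^2\cdot 42=268800>453^2=205209$.
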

	
	\begin{proof}
		For $\lambda=5/2$ we have $p=5$, $q=2$, and
		$$
		\mathcal{T}_\mathbb{R}(\Omega_{5/2})=\frac{1}{2}\mathcal{T}_\mathbb{Z}(A),
		\qquad
		A=\{0,\pm1,\pm4,\pm5,\pm6\}.
		$$
		Therefore
		$$
		\mathcal{T}_\mathbb{R}(\Omega_{5/2})>2
		\quad\Longleftarrow\quad
		\mathcal{T}_\mathbb{Z}(A)>4.
		$$
		We only give an explicit certification for $\mathcal{T}_\mathbb{Z}(A)>4$; no optimality of this finite problem is claimed.
		
		By Lemma~\ref{lem:finite-herglotz}, a finitely supported sequence $c$ with $c_0=1$ is admissible for $\mathcal{T}_\mathbb{Z}(A)$ if and only if
		$$
		P_c(t)=\sum_{n\in A}c_n e^{int}\ge0\qquad(t\in \mathbb{R}).
		$$
		For real symmetric $c$,
		$$
		P_c(t)
		=1+2c_1\cos t+2c_4\cos(4t)+2c_5\cos(5t)+2c_6\cos(6t).
		$$
		Put $x=\cos t$ and 
		$$
		T_k(\cos t)=\cos(kt).
		$$
		If
		$$
		P(x)=1+\alpha_1T_1(x)+\alpha_4T_4(x)+\alpha_5T_5(x)+\alpha_6T_6(x)
		$$
		is nonnegative for every $-1\le x\le1$, then the sequence
		$$
		c_0=1,\qquad c_{\pm k}=\frac{\alpha_k}{2}\quad(k=1,4,5,6),
		\qquad c_n=0\quad(n\notin A)
		$$
		is admissible for $\mathcal{T}_\mathbb{Z}(A)$. Its objective value is
		$$
		\sum_{n\in A}c_n
		=1+\alpha_1+\alpha_4+\alpha_5+\alpha_6
		=P(1),
		$$
		because $T_k(1)=1$.

		Set
		$$
		q(x)=x^3+\frac{\sqrt{42}}{12}x^2-\frac{2}{3}x-\frac{7\sqrt{42}}{144},
		\qquad
		P(x)=\frac{3456}{253}q(x)^2.
		$$
		Then $P\ge0$ on $[-1,1]$.
		
		A direct expansion gives
		\begin{equation}\label{eq:fivehalves-power-expansion}
			\begin{aligned}
				P(x)
				={}&\frac{343}{253}
				+\frac{224\sqrt{42}}{253}x
				+\frac{360}{253}x^2
				-\frac{720\sqrt{42}}{253}x^3  \\
				&-\frac{3600}{253}x^4
				+\frac{576\sqrt{42}}{253}x^5
				+\frac{3456}{253}x^6.
			\end{aligned}
		\end{equation}
		Using
		\[
		\begin{aligned}
			T_1(x)&=x,\\
			T_4(x)&=8x^4-8x^2+1,\\
			T_5(x)&=16x^5-20x^3+5x,\\
			T_6(x)&=32x^6-48x^4+18x^2-1,
		\end{aligned}
		\]
		the Chebyshev expansion below is verified by comparing coefficients:
		\[
		[x^6]P=32\frac{108}{253}=\frac{3456}{253},
		\qquad
		[x^5]P=16\frac{36\sqrt{42}}{253}=\frac{576\sqrt{42}}{253},
		\]
		\[
		[x^4]P=8\frac{18}{23}-48\frac{108}{253}
		=-\frac{3600}{253},
		\]
		\[
		-20\frac{36\sqrt{42}}{253}=-\frac{720\sqrt{42}}{253},\qquad
		18\frac{108}{253}-8\frac{18}{23}=\frac{360}{253},
		\]
		and
		\[
		\frac{4\sqrt{42}}{23}+5\frac{36\sqrt{42}}{253}
		=\frac{224\sqrt{42}}{253},\qquad
		1+\frac{18}{23}-\frac{108}{253}=\frac{343}{253}.
		\]
		Hence
		\begin{equation}\label{eq:fivehalves-square}
			P(x)
			=1+\frac{4\sqrt{42}}{23}T_1(x)
			+\frac{18}{23}T_4(x)
			+\frac{36\sqrt{42}}{253}T_5(x)
			+\frac{108}{253}T_6(x).
		\end{equation}
		In particular, the coefficients of $T_2$ and $T_3$ are zero, so no forbidden frequencies occur.
		From \eqref{eq:fivehalves-square}, the corresponding feasible sequence has
		\[
		c_{\pm1}=\frac{2\sqrt{42}}{23},\qquad
		c_{\pm4}=\frac{9}{23},\qquad
		c_{\pm5}=\frac{18\sqrt{42}}{253},\qquad
		c_{\pm6}=\frac{54}{253}.
		\]
		Therefore
		\[
		\mathcal{T}_\mathbb{Z}(A)\ge P(1)
		=1+\frac{4\sqrt{42}}{23}
		+\frac{18}{23}
		+\frac{36\sqrt{42}}{253}
		+\frac{108}{253}
		=\frac{559+80\sqrt{42}}{253}.
		\]
		Consequently
		\[
		\mathcal{T}_\mathbb{R}(\Omega_{5/2})
		=\frac{1}{2}\mathcal{T}_\mathbb{Z}(A)
		\ge \frac{559+80\sqrt{42}}{506}.
		\]
		Finally,
		\[
		\frac{559+80\sqrt{42}}{506}>2
		\]
		is equivalent to $80\sqrt{42}>453$, and this follows after squaring:
		\[
		80^2\cdot42=268800>453^2=205209.
		\]
		This proves the proposition.
	\end{proof}
	
	\section{The behavior of $\mathcal{T}_\mathbb{R} (\Omega_\lambda)$ with $\lambda$}
	We first observe that $\mathcal{T}_\mathbb{R}(\Omega_\lambda)$ is greater than $2$ for every $\lambda>2$. In fact, for $0<\varepsilon<1$, replace
	$E_\lambda=(0,1)\cup(\lambda,\lambda+1)$ by
	$E_{\lambda,\varepsilon}=(0,1-\varepsilon)\cup(\lambda,\lambda+1-\varepsilon)$.
	The corresponding normalized autocorrelation is Tur\'an admissible and has integral
	$2(1-\varepsilon)$; letting $\varepsilon\to0$ gives
	$T_{\mathbb R}(\Omega_\lambda)\ge2$. This means that we find a universal lower bound of the Tur\'an constant on the union of three intervals of equal length. In the following we represent some results about the estimate of the upper bound of $\mathcal{T}_\mathbb{R}(\Omega_\lambda)$. Thus, we can see that the Tur\'an constant on $\Omega_\lambda$ approaches 2 when $\lambda$ goes to infinity. 
	
	\begin{lemma}\label{lem:near-touching-upper}
		For every $\lambda>2$,
		$$
		\mathcal{T}_\mathbb{R}(\Omega_\lambda)\le \frac{\lambda+2}{2}.
		$$
	\end{lemma}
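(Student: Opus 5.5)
The plan is to apply Lemma~\ref{lem:periodization} with a period $h\le(\lambda+2)/2$ whose nonzero multiples avoid $\operatorname{int}\Omega_\lambda$. For such $h$, only the term $k=0$ survives in $h\sum_k f(kh)$, so $\int f\le h f(0)=h$.

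\textbf{Step 1 (large $\lambda$).} Take $h=(\lambda+2)/2$ and let $f$ be a real, even Tur\'an admissible function. Then $2h=\lambda+2>\lambda+1$, and $kh$ for $|k|\ge2$ lies outside $\Omega_\lambda$. For $\lambda\ge4$ we have $1<h\le\lambda-1$, so $h\notin\Omega_\lambda$; by symmetry $-h\notin\Omega_\lambda$ as well. Continuity gives $f=0$ on $\partial\Omega_\lambda$, hence $\sum_k f(kh)=f(0)=1$, and Lemma~\ref{lem:periodization} yields $\int f\le(\lambda+2)/2$.

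\textbf{Step 2 ($3\le\lambda<4$).} Here $h=(\lambda+2)/2$ falls into $(\lambda-1,\lambda+1)$, so first try the smaller period $h=(\lambda+1)/2$. Since $\lambda\ge3$ gives $1<h\le\lambda-1$, the point $h$ is not in $\Omega_\lambda$. Next, $2h=\lambda+1$ is a boundary point, so $f(2h)=0$. For $|k|\ge3$, $|kh|>\lambda+1$. Hence $\int f\le(\lambda+1)/2<(\lambda+2)/2$.

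\textbf{Step 3 ($2<\lambda<3$), the main obstacle.} A direct check suggests that no single period $h\le(\lambda+2)/2$ has all nonzero multiples outside $\operatorname{int}\Omega_\lambda$. Any such $h$ must be at least $1$. Since $\lambda-1<2$, it must also avoid $(\lambda-1,\lambda+1)$ and so satisfy $h\le\lambda-1$. But then $2h\le2\lambda-2<\lambda+1$ with $2h\ge2>\lambda-1$, so $2h\in\Omega_\lambda$.

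I would therefore keep $h=(\lambda+2)/2\in(\lambda-1,\lambda+1)$ and accept the extra term. Lemma~\ref{lem:periodization} then gives
\[
\int f\le h\bigl(1+2f(h)\bigr).
\]
It remains to show that $f(h)$ is effectively nonpositive, or to cancel it by a second inequality. My first attempt is a Delsarte-type certificate that combines this with the periodization bound at $h'=\lambda-1$:
\[
\int f\le (\lambda-1)\bigl(1+2f(2\lambda-2)\bigr).
\]
I would also bring in positive-definiteness constraints on the samples $f(0),f(h),f(2\lambda-2),\dots$, via Lemma~\ref{lem:conti-discete} and nonnegativity of the associated small Toeplitz matrices. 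The goal is a nonnegative combination in which the unknown values $f(h)$ and $f(2\lambda-2)$ cancel, leaving $\int f\le(\lambda+2)/2$.

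If this fails, the fallback is a dual certificate. I would look for a measure $\mu=\delta_0+\nu$ with $\nu$ supported in $\mathbb R\setminus\Omega_\lambda$ (possibly signed there) and $\widehat\mu\ge\frac{2}{\lambda+2}\delta_0$ in the distributional sense. For instance, $\mu$ could be a convex combination of the Dirac combs $h\sum_k\delta_{kh}$ over a family of periods, chosen so that the weights on points inside $\Omega_\lambda$ cancel. I expect constructing this certificate for $2<\lambda<3$ to be the genuinely hard part of the lemma.
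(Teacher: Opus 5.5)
Your Steps 1 and 2 are correct. In fact, for every $\lambda\ge3$ the single period $h=(\lambda+1)/2$ lies in $[1,\lambda-1]$ with $2h=\lambda+1\in\partial\Omega_\lambda$, so Lemma~\ref{lem:periodization} gives the sharper bound $(\lambda+1)/2$ on all of $[3,\infty)$.

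The gap is the range $2<\lambda<3$, which is exactly the range Proposition~\ref{prop:jump-at-two} relies on. You correctly show that no lattice $h\mathbb{Z}$ with $h\le(\lambda+2)/2$ avoids $\Omega_\lambda\setminus\{0\}$ there. After that you only list strategies, and the concrete ones cannot work:
\begin{itemize}
\item The value $f(h)$ at $h=(\lambda+2)/2$ is not nonpositive in general. The normalized autocorrelation of $(0,1-\varepsilon)\cup(\lambda,\lambda+1-\varepsilon)$ from Section~5 has $f(h)\to(4-\lambda)/4>0$.
\item Your two periodization inequalities both carry the unknowns $f(h)$ and $f(2\lambda-2)$ with positive coefficients. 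No nonnegative combination of them eliminates these terms, and the Toeplitz constraints that would supply negative coefficients are never specified.
\item A convex combination of lattice combs $h\sum_k\delta_{kh}$ is a nonnegative measure. Its positive mass on $\Omega_\lambda\setminus\{0\}$ therefore cannot cancel.
\end{itemize}

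The missing idea is to replace the lattice by a periodic set with two points per period, i.e.\ to use a second zero of the periodization. Fix $a\in(1,\lambda-1)$ and $L>\lambda+1+a$. Then $L\mathbb{Z}\cup(L\mathbb{Z}\pm a)$ meets $\Omega_\lambda$ only at $0$, so $P_L(x)=\sum_k f(x+kL)$ satisfies $P_L(0)=1$ and $P_L(a)=0$. Write $P_L(x)=\sum_n\gamma_n e^{2\pi i n x/L}$ with $\gamma_n\ge0$ and $\sum_n\gamma_n=1$. The relation $P_L(a)=0$ gives $\gamma_0=-\sum_{n\ne0}\gamma_n\cos(2\pi na/L)\le 1-\gamma_0$. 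Hence $\int f=L\gamma_0\le L/2$, and letting $L\to\lambda+1+a$, $a\to1$ yields $(\lambda+2)/2$ for every $\lambda>2$, with no case split. This is the paper's proof.

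In your dual language, the certificate is $\mu=\sum_k\bigl(\delta_{kL}+\tfrac12\delta_{kL+a}+\tfrac12\delta_{kL-a}\bigr)$, the comb convolved with the positive definite measure $\delta_0+\tfrac12(\delta_a+\delta_{-a})$. Its Fourier transform $\frac1L\sum_n\bigl(1+\cos(2\pi na/L)\bigr)\delta_{n/L}$ dominates $\frac2L\delta_0$, and its support avoids $\Omega_\lambda\setminus\{0\}$. Equivalently, this is the Kolountzakis--R\'ev\'esz packing bound with $\Lambda=\{0,a\}+L\mathbb{Z}$ of density $2/L$.
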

	
	\begin{proof}
		Fix $a \in (1,\lambda-1)$ arbitrarily, 
		$$
		\left(L\mathbb{Z}+a \right) \cap \Omega_\lambda = \emptyset, \quad \forall L>a+\lambda+1.
		$$
		Let $f$ be a Tur\'an admissible function with respect to $\Omega_\lambda$. Without loss of generality, we assume that $f$ is real and even. Define
		$$
		P(x)=\sum_{k\in \mathbb{Z}}f(x+kL).
		$$
		Then $P(x)$ is positive definite, $P(0)=1$, $P(a)=0$ and
		\begin{equation}\label{eq:periodized-integral-exact}
			\int_0^L P(x)\,dx=\int_\mathbb{R} f(x)\,dx.
		\end{equation}
		
		By Bochner-Herglotz's theorem, we have
		$$
		P(x)=\sum_{n\in\mathbb{Z}}\gamma_n e^{2\pi i n x/L}, \quad x\in [0,L]
		$$
		with $\gamma_n\ge0$ and $\sum_n\gamma_n=P(0)=1$. Moreover
		$$
		\gamma_0=\frac1L\int_0^L P(x)\,dx.
		$$
		Since $P(0)=1$ and $P(a)=0$, 
		$$
		\gamma_0\le\sum_{n\ne0}\gamma_n=1-\gamma_0.
		$$
		It follows that $\gamma_0\le1/2$. Then combining with \eqref{eq:periodized-integral-exact}, we have
		$$
		\int_\mathbb{R} f(x)\,dx=L\gamma_0\le\frac L2.
		$$
		Taking the supremum over $f$ first, and then letting $L \to (a+\lambda+1)^+$ and $a \to 1^+$, so that
		$$
		\mathcal{T}_\mathbb{R}(\Omega_\lambda)\le\frac{\lambda+2}{2}.
		$$
	\end{proof}
	
	Through the above estimate, we discovered an interesting behavior of the Tur\'an constant with respect to $\Omega_\lambda$, namely, $\mathcal{T}_{\mathbb{R}}(\Omega_\lambda)$ is not continuous at integers, but rather jumps at these points. 
	
	\begin{proposition}\label{prop:jump-at-two}
		One has
		\begin{equation}\label{eq:jump-at-two}
			\lim_{\lambda\to 2^+} \mathcal{T}_{\mathbb{R}} (\Omega_\lambda) = 2 < 3 = \lim_{\lambda\to 2^-} \mathcal{T}_{\mathbb{R}}(\Omega_\lambda).
		\end{equation}
		In particular, $\lambda=2$ is not a continuous point of $\mathcal{T}_{\mathbb{R}} (\Omega_\lambda)$.
	\end{proposition}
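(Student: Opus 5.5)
The plan is to compute the two one-sided limits separately, using only results already established.

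\textbf{Left limit.} For $0\le\lambda<2$, Theorem~\ref{thm:connected} gives $\mathcal{T}_{\mathbb{R}}(\Omega_\lambda)=1+\lambda$. This function is continuous in $\lambda$, so $\lim_{\lambda\to2^-}\mathcal{T}_{\mathbb{R}}(\Omega_\lambda)=3$. The reason is geometric: for $\lambda<2$ the three intervals overlap and $\Omega_\lambda=(-\lambda-1,\lambda+1)$ is a single interval, so Proposition~\ref{lem:interval} applies with $L=\lambda+1$.

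\textbf{Right limit.} I squeeze $\mathcal{T}_{\mathbb{R}}(\Omega_\lambda)$ between two bounds for $\lambda>2$.

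The lower bound is the universal one from the start of this section. Take the normalized autocorrelation of $E_{\lambda,\varepsilon}=(0,1-\varepsilon)\cup(\lambda,\lambda+1-\varepsilon)$, namely $|E|^{-1}\mathbf 1_E*\mathbf 1_{-E}$. It is positive definite and equals $1$ at the origin. Its support lies in $E-E$, which is contained in $(-1,1)\cup(\lambda-1,\lambda+1)\cup(-\lambda-1,-\lambda+1)=\Omega_\lambda$. Its integral is $|E|=2(1-\varepsilon)$. Letting $\varepsilon\to0$ gives $\mathcal{T}_{\mathbb{R}}(\Omega_\lambda)\ge2$ for every $\lambda>2$.

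The upper bound is Lemma~\ref{lem:near-touching-upper}, which gives $\mathcal{T}_{\mathbb{R}}(\Omega_\lambda)\le(\lambda+2)/2$.

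Together,
\[
2\le\mathcal{T}_{\mathbb{R}}(\Omega_\lambda)\le\frac{\lambda+2}{2}\qquad(\lambda>2).
\]
The right-hand side tends to $2$ as $\lambda\to2^+$, so $\lim_{\lambda\to2^+}\mathcal{T}_{\mathbb{R}}(\Omega_\lambda)=2$.

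\textbf{Conclusion and the one delicate point.} Since $2<3$, the one-sided limits differ, and $\lambda=2$ is not a point of continuity.

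The only step needing care is that Lemma~\ref{lem:near-touching-upper} really does hold all the way down to $\lambda\to2^+$. Its proof needs a point $a\in(1,\lambda-1)$, and this interval is nonempty exactly when $\lambda>2$. It also needs the choice $L>a+\lambda+1$ to guarantee that $L\mathbb{Z}+a$ avoids $\Omega_\lambda$. Once both conditions are checked, the bound $(\lambda+2)/2$ holds for each $\lambda>2$, and no uniformity in $\lambda$ is needed, since the limit is taken only at the end. Everything else is a direct citation.
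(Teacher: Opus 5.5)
Your proof is correct and follows the paper's argument: the right limit comes from the squeeze $2\le\mathcal{T}_{\mathbb{R}}(\Omega_\lambda)\le(\lambda+2)/2$, using the autocorrelation of $E_{\lambda,\varepsilon}$ and Lemma~\ref{lem:near-touching-upper}, and the left limit comes from Theorem~\ref{thm:connected}. Your version of the left limit, $\lim_{\lambda\to2^-}(1+\lambda)=3$, is slightly more accurate than the paper's appeal to $\mathcal{T}_{\mathbb{R}}((-3,3))=3$, because $\Omega_2=(-3,3)\setminus\{\pm1\}$ is not that interval.
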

	
	\begin{proof}
		By Lemma~\ref{lem:near-touching-upper}, for every $\lambda>2$,
		$$
		2\le \mathcal{T}_\mathbb{R}(\Omega_\lambda)\le\frac{\lambda+2}{2}.
		$$
		Letting $\lambda\to2^+$ gives
		$$
		\lim_{\lambda\to2^+} \mathcal{T}_\mathbb{R}(\Omega_\lambda)=2.
		$$
		On the other hand, Theorem~\ref{thm:connected} gives $\mathcal{T}_\mathbb{R} \left( (-3,3) \right) = 3$. This proves \eqref{eq:jump-at-two}.
	\end{proof}
	
	Furthermore, we provide a more precise estimate. Let $\Omega \subset \mathbb{R}^d$ be a origin-symmetric open set. In the paper \cite{KR2006}, the authors proved that the Tur\'an constant $\mathcal{T}_{\mathbb{R}}(\Omega)$ has a upper bound $1 / \rho$ where $\rho > 0$ is the upper density of a set $\Lambda$ with $\Omega \cap(\Lambda-\Lambda) \subseteq\{0\}$. 
	
	\begin{proposition}
		For $\lambda > 5$, we have 
		$$
		\mathcal{T}_\mathbb{R}(\Omega_\lambda)\le \frac{2\lambda-2}{\lfloor \lambda-5 \rfloor +3}.
		$$
	\end{proposition}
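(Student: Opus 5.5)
The plan is to apply the Kolountzakis--R\'ev\'esz bound from \cite{KR2006} quoted just above: $\mathcal{T}_\mathbb{R}(\Omega)\le 1/\rho$ whenever $\Omega\cap(\Lambda-\Lambda)\subseteq\{0\}$ and $\Lambda$ has upper density $\rho$. So I will construct an explicit periodic set $\Lambda$ with period $L=2\lambda-2$ and $k=\lfloor\lambda-5\rfloor+3=\lfloor\lambda\rfloor-2$ points per period. Its density is $k/(2\lambda-2)$, which gives exactly the stated bound. The natural choice is
$$
\Lambda=\{0,1,2,\dots,\lfloor\lambda\rfloor-3\}+L\mathbb{Z},\qquad L=2\lambda-2 .
$$

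\textbf{Step 1: reduce to residues mod $L$.} Every nonzero difference in $\Lambda-\Lambda$ has the form $d=j+mL$ with $|j|\le\lfloor\lambda\rfloor-3$ and $m\in\mathbb{Z}$. Two observations handle this.

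\emph{Case $m=0$, $j\neq0$.} Then $1\le|d|\le\lambda-3<\lambda-1$. So $d\notin(-1,1)$ and $d\notin\pm(\lambda-1,\lambda+1)$, hence $d\notin\Omega_\lambda$.

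\emph{Case $m\neq0$.} Then $|d|\ge L-(\lambda-3)=\lambda+1$. Since $\Omega_\lambda\subset(-\lambda-1,\lambda+1)$, again $d\notin\Omega_\lambda$.

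Thus $\Omega_\lambda\cap(\Lambda-\Lambda)=\{0\}$. The density of $\Lambda$ is $k/L$ because the $k$ integers $0,\dots,\lfloor\lambda\rfloor-3$ are distinct mod $L$: indeed $\lfloor\lambda\rfloor-3<L$.

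\textbf{Step 2: obtain the bound.} The cited bound now yields $\mathcal{T}_\mathbb{R}(\Omega_\lambda)\le (2\lambda-2)/(\lfloor\lambda\rfloor-2)$, which is the claim. The hypothesis $\lambda>5$ only guarantees that this improves on Lemma~\ref{lem:near-touching-upper}. The construction itself needs just $\lambda\ge3$, so that $k\ge1$.

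\textbf{Optional self-contained version.} To avoid relying on the general LCA statement, I would reprove the needed special case in the style of Lemma~\ref{lem:near-touching-upper}. Periodize a real even admissible $f$ with period $L$ to get $P$, whose Fourier coefficients $\gamma_n$ are nonnegative, with $\gamma_0=\frac1L\int f$. Then evaluate
$$
\sum_{s,t=0}^{k-1}P(s-t).
$$
By Step 1, $P(s-t)=0$ for $s\ne t$, because $s-t+L\mathbb{Z}$ avoids $\operatorname{supp} f$. Hence the sum equals $kP(0)=k$. Expanding in Fourier series, the same sum equals
$$
\sum_n\gamma_n\Bigl|\sum_{s}e^{2\pi i ns/L}\Bigr|^2\ge\gamma_0k^2 .
$$
Therefore $\int f=L\gamma_0\le L/k$.

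The main obstacle I expect is the bookkeeping in Step 1: confirming that no wrapped difference $j+mL$ lands in a forbidden interval. The key inequality there is $L-(\lambda-3)=\lambda+1$, which is exactly what makes the period $2\lambda-2$ work. A second check is that the endpoints of $\Omega_\lambda$ are excluded, since $\Omega_\lambda$ is open.
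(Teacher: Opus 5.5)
Your proof is correct. It uses the same tool as the paper: the Kolountzakis--R\'ev\'esz bound $\mathcal{T}_\mathbb{R}(\Omega)\le 1/\rho$, applied to a set of period $2\lambda-2$ with $\lfloor\lambda\rfloor-2$ points per period. Your packing set, however, is different and simpler.

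The paper takes the lattice $(\lambda-1)\mathbb{Z}$ and adds $\lfloor\lambda-5\rfloor+1$ unit-spaced points in each window $[(\lambda-1)k+2,(\lambda-1)(k+1)-2]$ with $k$ odd. This forces a separate check of lattice-to-filler differences, which fall in $[2,\lambda-3]$ or have modulus at least $\lambda+1$. It also needs $\lambda\ge5$ just to make the windows nonempty, and its density comes out of a limiting count. With your single block $\{0,\dots,\lfloor\lambda\rfloor-3\}+L\mathbb{Z}$, the verification reduces to your two cases, driven by $L-(\lambda-3)=\lambda+1$, and the density of a periodic set is immediate.

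Your optional self-contained argument is a genuine gain. It removes the dependence on the general LCA theorem. It is also exactly the $k$-point version of the proof of Lemma~\ref{lem:near-touching-upper}, which is the two-point case $\{0,a\}$ (there $P(0)=1$, $P(\pm a)=0$ give $4\gamma_0\le 2$).

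Your scheme in fact yields more than the statement. Take the block $\{0,1,\dots,\lfloor\lambda\rfloor-1\}$ with period $L=\lambda+\lfloor\lambda\rfloor$. The same two checks still work: nonzero in-block differences lie in $[1,\lambda-1]$, and all other nonzero differences have modulus at least $L-(\lfloor\lambda\rfloor-1)=\lambda+1$. This gives $\mathcal{T}_\mathbb{R}(\Omega_\lambda)\le 1+\lambda/\lfloor\lambda\rfloor$. That bound is strictly smaller than $(2\lambda-2)/(\lfloor\lambda\rfloor-2)$ for all $\lambda\ge3$, since with $n=\lfloor\lambda\rfloor$ this reduces to $n^2<\lambda(n+2)$. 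It coincides with Lemma~\ref{lem:near-touching-upper} for $2<\lambda<3$, and it equals $2$ at integers, matching Proposition~\ref{cor:integer}.
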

	
	\begin{proof}
		For each odd $k \in \mathbb{Z}$, one could get $\lfloor \lambda-5 \rfloor +1$ points in the interval 
		$$\left[ \left(\lambda-1\right)k+2, \left(\lambda-1\right) \left(k+1\right) -2\right]$$ 
		such that the distant between every two points is not equal to the value in $(0,1) \cup (\lambda-1, \lambda+1)$. We denote the set of the above points $\Lambda_k$.
		
		Define the discrete set 
		$$
		\Lambda = (\cup_{k \in 2\mathbb{Z}+1} \Lambda_k) \bigcup  (\lambda-1)\mathbb{Z}.
		$$
		Then $\Omega_\lambda \cap(\Lambda-\Lambda) \subseteq \{0\}$. Now we estimate the upper density of $\Lambda$. For any $h>0$, there exist an integer $l$ such that $ h \in [\left(\lambda-1\right)l,  \left(\lambda-1\right) \left(l+1\right))$. Therefore, 
		\begin{align*}
			\sup_{x \in \mathbb{R}} \frac{|\Lambda \cap [-h,h]|}{2h} &\geq \frac{|\Lambda \cap [-h,h] +x |}{2h}\\
			&\geq \frac{|\Lambda \cap [-\left(\lambda-1\right)l,\left(\lambda-1\right)l]|}{2\left(\lambda-1\right)(l+1)}\\
			&= \frac{l(\lfloor \lambda-5 \rfloor +1)+2l+1}{2\left(\lambda-1\right)(l+1)}.
		\end{align*}
		So we have
		\begin{align*}
			\operatorname{dens} (\Lambda) &= \limsup_{h \rightarrow \infty} \sup_{x \in \mathbb{R}} \frac{|\Lambda \cap [-h,h]|}{2h}\\
			&\geq \lim_{k \rightarrow \infty} \frac{l(\lfloor \lambda-5 \rfloor +1)+2l+1}{2\left(\lambda-1\right)(l+1)}\\
			&= \frac{\lfloor \lambda-5 \rfloor +3}{2\lambda-2}.
		\end{align*}
		Hence 
		$$
		\mathcal{T}_\mathbb{R}(\Omega_\lambda)\leq \frac{1}{\operatorname{dens} (\Lambda)} \leq \frac{2\lambda-2}{\lfloor \lambda-5 \rfloor +3}.
		$$
	\end{proof}
	
	\section{Discussion}
	The results above reduce the continuous problem to finite arithmetic models, leaving two natural directions for further investigation. 
	First, one may study the asymptotic behavior of $\mathcal{T}_\mathbb{R}(\Omega_\lambda)$ as $\lambda\to\infty$ along rational parameters $p/q$ with bounded $q$. The rigidity inequality $\mathcal{T}_\mathbb{R}(\Omega_N)=2$ suggests the possibility of arithmetic oscillation.
	Second, the finite-union principle extends to sets with multiple supports. For the set
	$$
	\Omega_\lambda^{(k)}=[-1,1]\cup\bigcup_{j=1}^k\bigl([\lambda_j-1,\lambda_j+1]\cup[-\lambda_j-1,-\lambda_j+1]\bigr),
	$$
	one obtains a limit of finite discrete Tur\'an problems with $2k+1$ lattice blocks, which become commensurate under a finite union principle. It therefore remains to evaluate these finite arithmetic models. 
	
	\bigskip
	
	\textbf{Acknowledgments}
	Following the reduction of the continuous Tur\'an problem for \(\Omega_{\lambda}\) to finite discrete Turán problems in Theorem \ref{thm:sandwich}, generative AI tools were consulted to investigate computational methods for the relevant finite optimization problems. Notably, they proposed the semidefinite programming formulation adopted for the problems studied in \textbf{Section~4}. 
	
	For the case $\lambda=5/2$ in \textbf{Proposition~4.2}, generative AI served as a computational aid to search for an explicit feasible nonnegative polynomial certificate. This included identifying the polynomials 
	\[
	q(x)
	=
	x^3+\frac{\sqrt{42}}{12}x^2-\frac{2}{3}x
	-\frac{7\sqrt{42}}{144},
	\qquad
	P(x)=\frac{3456}{253}q(x)^2,
	\]
	and supporting the associated symbolic calculations such as
	polynomial and Chebyshev expansion, checking cancellation of the forbidden $T_2$- and $T_3$-coefficients, and computing the
	lower bound from $P(1)$.
	
	All mathematical derivations, exact identities, proofs and conclusions presented in the manuscript were independently examined and fully verified by the authors. Generative AI tools were additionally employed for linguistic refinement across the manuscript.

\end{document}